\def\N{{\mathbb{N}}}
\def\R{{\mathbb{R}}}
\def\IMF{{\textrm{IMF}}}
\newtheorem{theorem}{Theorem}
\newtheorem{corollary}{Corollary}
\newtheorem{definition}{Definition}
\begin{document}

\title{New theoretical insights in the decomposition and time-frequency representation of nonstationary signals: the IMFogram algorithm.}
\author{Antonio Cicone\thanks{DISIM, Universit\`a degli Studi dell'Aquila, L'Aquila, ITALY, and Istituto di Astrofisica e Planetologia Spaziali, INAF, Rome, ITALY, and Istituto Nazionale di Geofisica e Vulcanologia, Rome, ITALY ({\tt antonio.cicone@univaq.it})}
\and
Wing Suet Li\thanks{School of Mathematics, Georgia Institute of Technology, Atlanta, GA, USA. ({\tt li@math.gatech.edu})}
\and
Haomin Zhou\thanks{School of Mathematics, Georgia Institute of Technology, Atlanta, GA, USA. ({\tt hmzhou@math.gatech.edu})}
}

\maketitle
\date{}

\begin{abstract}

The analysis of the time--frequency content of a signal is a classical problem in signal processing, with a broad number of applications in real life. Many different approaches have been developed over the decades, which provide alternative time--frequency representations of a signal each with its advantages and limitations. In this work, following the success of nonlinear methods for the decomposition of signals into intrinsic mode functions (IMFs), we first provide more theoretical insights into the so--called Iterative Filtering decomposition algorithm, proving an energy conservation result for the derived decompositions. Furthermore, we present a new time--frequency representation method based on the IMF decomposition of a signal, which is called IMFogram. We prove theoretical results regarding this method, including its convergence to the spectrogram representation for a certain class of signals, and we present a few examples of applications, comparing results with some of the most well know approaches available in the literature.

\end{abstract}

\section{Introduction}

The study of techniques for the time--frequency (TF) analysis of signals is a long lasting line of research in signal processing which have led over the decades to the development of many new algorithms and approaches, which are nowadays commonly used practically in many field of research \cite{flandrin1998time}. One of the main goals of these techniques is the derivation of what is called the TF representation (TFR) of a given signal. The TFR is a two dimensional interpretation of a monodimensional signal, where one dimension relates to time and the other one to frequency. Studying the TFR of a signal allows in many cases to unveil its hidden features, especially the nonstationary ones.

There exist many types of TF analysis algorithms. Most of them belong to either linear or bilinear methods \cite{flandrin1998time}. In linear methods, the signal is studied via inner products with (or correlations with) a pre-assigned basis. The most well known methods are the windowed Fourier transform \cite{cohen1995time}, and the wavelet transform \cite{daubechies1992ten}. However, in all these methods, and related ones, the chosen basis unavoidably leaves its peculiar footprint in the derived TFR, which may badly affect its interpretation when looking for properties of the signal. Moreover, the Heisenberg uncertainty principle limits the maximal achievable resolution of the TF plane due to blurring or smearing out of the TFR. Different choices of the linear transform or basis can produce different trade-offs, but none is ideal \cite{daubechies2011synchrosqueezed,wu2020current}.
In the bilinear methods for TFR, like the Wigner-Ville distribution and, more generally, the Cohen's class or affine class, one can avoid introducing a basis with which the signal is compared or measured. As a consequence, some features can have a crisper and more focused representation in the TF plane with these methods. However, as a side effect, reading the TFR of a multi-component signal becomes more complicated by the presence of interference terms between the TFRs of the single components. These interferences lead in most cases the TF density to be even negative in some parts of the TF plane. This last flaw can be removed by some postprocessing of the representation, which, however, reintroduce unavoidably some blur in the TF plane \cite{flandrin1998time}. Furthermore, the extraction of a signal, or part of it, is much less straightforward for bilinear than for linear TFRs. To make things worse, in many practical applications the signals under study contain several components which are highly nonstationary, with time varying characteristics which may be important to capture as accurately as possible. For such signals both the linear and bilinear methods show their limitations. Bilinear methods TFRs end up being corrupted by many interference terms, and even if these interferences can be mitigated, the reconstruction of the individual components would still be a hard problem to solve. Linear methods are too rigid, or provide too blurred TFRs \cite{daubechies2011synchrosqueezed}.

For all these reasons in the last few years several approaches have been proposed, like the reassignment method \cite{auger1995improving,chassande2003time}, the scattering transform and the time-frequency scattering \cite{mallat2012group,anden2019joint}, the synchrosqueezing transform and the concentration of
frequency and time \cite{daubechies1996nonlinear,daubechies2011synchrosqueezed,auger2013time,daubechies2016conceft}. For a more comprehensive list and further details on these methods please refer to \cite{wu2020current}. All these methods allow to produce a much crisper and focused TF representation of a highly nonstationary multi-components signal. However the problem of how to properly extract chirps from a given signal remains an open problem even with these approaches. This is because they have been developed for TF analysis and they have only limited abilities in the decomposition and separation of components, especially when the noise level increases \cite{daubechies2016conceft}. New promising results in this direction are coming from the so called de-shape algorithm and Ramanujan de-shape method \cite{lin2018wave,chen2020ramanujan}. Furthermore, the extension of these kind of methods to higher dimensional and multivariate signals has been only partially explored so far \cite{lin2019wave}.

Two decades ago a different kind of method was introduced, the so called Hilbert Huang Transform (HHT) \cite{huang1998empirical}, a local and adaptive data-driven method which has an iterative ``divide et impera'' approach. The idea is simple, but powerful: we first iteratively divide the signal into several simple oscillatory components via the so called Empirical Mode Decomposition (EMD) method, then each of them is analyzed separately in the TF domain via the well known Hilbert transform, i.e. the computation of the instantaneous frequency of each component \cite{huang2014introduction,huang2009instantaneous}. This approach allows to bypass in part the Heisenberg-Gabor uncertainty principle \cite{flandrin1998time}, overcoming, in particular, artificial spectrum spread caused by sudden changes. EMD allows to produce each simple oscillatory component via the subtraction of the signal moving average which is computed as the average between two envelopes connecting its minima and maxima \cite{huang1998empirical}. These simple oscillatory components have been named intrinsic mode functions (IMFs) by Huang and collaborators \cite{huang1998empirical} and are informally defined in  that work as functions fulfilling two properties:
the number of extrema and the number of zero crossings must either equal or differ at most by one;
the mean between an upper envelope, connecting all the local maxima, and a lower envelope, connecting all the local minima of the function, has to be zero at any point.

The decompositions produced using the EMD algorithm proved to be successful for a wide range of applications, like, for example, \cite{cummings2004travelling,parey2006dynamic,yu2008forecasting,huang2008review,tary2014spectral,stallone2020new}. Nevertheless, the EMD algorithm contains a number of heuristic and ad hoc elements that make hard its mathematical analysis, including an a priori convergence. This is because the core of the algorithm relies heavily on interpolates of signal maxima and minima. This very approach does have also some stability problems in the presence of noise, as illustrated in \cite{huang2009EEMD}. Several variants of the EMD have been recently proposed to address this last problem, e.g. the Ensemble Empirical Mode Decomposition (EEMD) \cite{huang2009EEMD}, the complementary EEMD \cite{yeh2010complementary}, the complete EEMD \cite{torres2011complete}, the partly EEMD \cite{zheng2014partly}, the noise assisted multivariate EMD (NA-MEMD) \cite{rehman2009Filterbank}. They all allow to address this issue as well as reduce the so called mode mixing problem. But they pose new challenges both to our mathematical understanding of this kind of techniques and to the ability of these methods to handle chirps, since they worsen the mode-splitting problem present in the EMD algorithm \cite{yeh2010complementary}.
Given the attention that these methods received from the worldwide scientific community (Huang papers received so far more than 30000 citations based on Scopus) many other research groups started working on this topic and proposed their alternative approaches to signals decomposition. We recall the sparse TF representation \cite{hou2011adaptive,hou2009variant}, the Geometric mode decomposition \cite{yu2018geometric}, the Blaschke decomposition \cite{coifman2017carrier}, the Empirical wavelet transform \cite{gilles2013empirical}, the Variational mode decomposition \cite{dragomiretskiy2013variational}, and similar techniques \cite{selesnick2011resonance,meignen2007new,pustelnik2012multicomponent}. All of these methods are based on optimization with respect to an a priori chosen basis.

The only alternative method proposed so far in the literature which is based on iterations, and hence does not require any a priori assumption on the signal under analysis, is Iterative Filtering (IF) algorithm \cite{lin2009iterative}, its fast implementation based on FFT, named Fast Iterative Filtering (FIF) \cite{cicone2020numerical}, and their generalizations, the Adaptive Local Iterative Filtering (ALIF) and Resampled Iterative Filtering (RIF) algorithms \cite{barbarino2020conjectures,barbarino2021stabilization} for the handling of signals containing strongly nonstationarities, like chirps, whistles and multipaths. These alternative iterative methods, although published only recently, have already been used effectively in a wide variety of applied fields like, for instance, in \cite{yu2010modeling,sharma2017automatic,li2018entropy,mitiche2018classification,spogli2019role,materassi2019stepping,papini2020multidimensional,piersanti2020magnetospheric,spogli2020adaptive,piersanti2020inquiry}. The structure of the IF, ALIF and RIF algorithms resemble the EMD one. Their key difference is in the way the signal moving average is computed, i.e., via correlation of the signal itself with an a priori chosen filter function, instead of using the average between two envelopes. This apparently simple difference opened the doors to the mathematical analysis of IF, ALIF and RIF \cite{huang2009convergence,cicone2020study,cicone2020numerical,barbarino2020conjectures,stallone2020new,cicone2022oneortwo}.

In this work we tackle the problem of building a fast, local and reliable TFR method based on the IMFs decomposition produced by the IF--based methods. To do so, we present some new insight in the study of the IF--based methods, and we analyze in details the properties of the derived IMFs. Based on these results we propose a new TFR technique, named IMFogram (pronounced like ``infogram''), which proves to be a generalization of the spectrogram of a signal \cite{flanagan2013speech}. We point out that the IMFogram method requires only the IMFs decomposition to produce in a fast, local and reliable way the TFR of a signal. Therefore this method is not limited only to IF--based IMFs decompositions. IMFogram can be used to produced a TFR based on the IMFs derived by any decomposition method, like the EMD--based techniques, or any other signal decomposition method developed so far.

The rest of the work is organized as follows. In Section \ref{sec:IF} we review the IF method, its fast implementation based on FFT, named Fast Iterative Filtering (FIF), and their main theoretical properties. Section  \ref{sec:IMFogram} is devoted to presenting new theoretical insight in the decompositions produced by IF--based methods. Then, leveraging on these new theoretical insights, we present the IMFogram, and we study its mathematical properties. In Section \ref{sec:NumericalExamples} we show some numerical examples of applications of the newly proposed IMFogram to both artificial and real life signals, and the comparisons with other 
methods, like spectrogram, a.k.a. Short Time Fourier Transform, synchrosqueezing, and Hilbert transform based TFRs. This work ends with an outlook to future research directions.

\section{(Fast) Iterative Filtering}\label{sec:IF}

To begin, we recall briefly the Fast Iterative Filtering (FIF) method  that decomposes a discrete signal $s$ into a collection of IMFs. To keep the discussion concise, our signal is always a finite sequence $s=(s_j)_{0}^{n-1}$ in $\mathbb{R}^n$. This finite sequence should be viewed as a time series over the time interval $[0,L]$, evenly sampled at the rate of $2B$. More precisely, for $j=0, 1, \dots,  n-1$, $n=2BL$,
\begin{equation}
    t_j =\frac{j}{2B},\quad s_j = s(t_j).\label{eq:t_j}
\end{equation}

Thus, $(s_j)$ can be viewed as a discretization of $s$. With a bit of abuse of notation, $s$ refers to both the underlining signal and the discretization of the signal. From a discrete signal $s=(s_j)$ in $\mathbb{R}^n$, we can also view the signal $s$ is a continuous function by a linear interpolation of $(s_j)$ over the interval $[0,L]$.

For convenience, rather than use the usual $2$-norm of a vector in $a=(a_k)_{1\le k\le n}\in \mathbb{R}^n$ which corresponds to a signal over a time domain [0,L], as $\displaystyle{\left( \sum_k |a_k|^2\right )^{1/2}}$, throughout the paper, we use the normalized $2$-norm:
\[
\|a\|_2=\frac{L}{n}\left( \sum_j |a(t_j)|^2\right )^{1/2}
\]
where the function $a: [0,L]\to \mathbb{R}$ is obtained as the linear interpolation of the sequence $(a_k)$ over the interval $[0,L]$.

The discrete Fourier transform of $(s_j)$, $\hat{s}_k=\hat{s}(\xi_k)$, where
\[
\xi_k=\frac{k}{L}, \quad k=0,1,\dots,n-1\label{eq:xi_k}
\] and
\[
\hat{s}(\xi_k)=\sum_{j=0}^{n-1}s_je^{-2{\pi}is_j{\xi}_k}.
\]
So the sampling rate for $\hat{s}$, the discrete Fourier transform of $s$ is $\frac{1}{L}$ over the frequency interval $[0,2B]$.

For all the signals, and later any filters, that we will consider here, we assume that they are all pre-extended to eliminate any boundary error propagation that may occur due to the usage of Fast Fourier transformation (FFT). For a detailed discussion, please see \cite{cicone2020study,stallone2020new}.


A key object in the iterative filtering method is the filter. We recall the definition here for completeness.

\begin{definition}\label{def:filter}
\begin{itemize}
    \item [(1)] A function $w:[-l, l]\to \mathbb{R}$ is a filter if it is nonnegative, even, bounded, continuous, and $\int_{\mathbb{R}} w(t)\,dt = 1$.
    \item[(2)] A double convolution filter $w$ is the self-convolution of a filter $\tilde{w}$, that is $w=\tilde{w}*\tilde{w}$.
    \item[(3)] The size, or the length, of a filter $w$ is half of its support: filter length of $w= \frac{1}{2}\,m\{t: w(t)>0\}$.
\end{itemize}
\end{definition}

The definition implies that convolutions of filters are filters. It is easy to see that the range of the Fourier transform of a filter is in $[-1, 1]$, and the range of the Fourier transform of a double convolution filter is in $[0,1]$.

We will also identify the filter $w$ with its discretization: $w=(w_j) = (w(t_j))$, where $t_j$ defined as in \eqref{eq:t_j}.
Consider a signal $s=(s_j)$ and a double convolution filter $w=(w_j)$ in $\mathbb{R}^n$. Both sequences $(s_j)$ and $(w_j)$ are properly extended, as mentioned earlier, so that there will be no boundary error when FFT is applied.

The moving average with respect to the filter $w$ is defined as the convolution
\begin{equation}
    \mathcal{C}_ws=w*s,\quad (\mathcal{C}_ws)_j = \sum_{k=0}^{n-1}w_{k+j}s_k.\label{eq:moving average}
\end{equation}
We subtract the moving average from the signal and obtain the variation of the signal around its $w$-moving average,
\begin{equation}
    \mathcal{V}_ws=s-\mathcal{C}_w*s,\quad (\mathcal{V}_ws)_j=s_j-(\mathcal{C}_w*s)_j.\label{eq:variation of signal}
\end{equation}
Iterating $p$-times the linear operator $\mathcal{V}_w$, we obtain the linear IMF operator,
\begin{equation}
    \mathcal{I}_{w,p}=\mathcal{V}_w^p \label{eq:IMFoperator}
\end{equation}

Observe that in the frequency domain,
\begin{equation}
    \left(\widehat{\mathcal{V}_ws}\right)_j = (1-\widehat{w})_j (\widehat{s})_j, \quad
    (\widehat{\mathcal{I}_{w,p}s})_j=(1-(\widehat{w})_j)^p(\widehat{s})_j. \label{eq:IMFoperatorFrequencyDom}
\end{equation}
Taking the inverse FFT (iFFT), we obtain the an IMF from $s$ with filter $w$ and $p$ iterations:
\begin{equation}
 \textrm{IMF} = \textrm{IMF}_{w,p} =  \textrm{iFFT}(\widehat{\mathcal{I}_{w,p}s}),  \label{eq:IMFoperator}
\end{equation}

We recall the following theorem from \cite{cicone2020numerical}, which guarantees that the difference of consecutive iterations of the variation operators converges to zero as the number of iterations increases.

\begin{theorem}[\cite{cicone2020numerical}]\label{thm:IF_inner_conv_stopping}
    Let $s=(s)_j$ be a discrete signal with a double convolution filter $w=(w_j)$.  Then for all $p$,
\begin{equation}\label{eq:IMF_IF_stop}
\left\| \mathcal{V}_w^{p+1}(s)-\mathcal{V}_w^{p}(s)\right\|_2\le\frac{1}{ep}\|s\|_2.
\end{equation}
\end{theorem}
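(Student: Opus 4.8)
The plan is to work entirely in the frequency domain, where the operators $\mathcal{V}_w^p$ become diagonal multipliers, reduce the statement to an elementary scalar inequality, and then transfer the resulting pointwise bound back to the signal domain via Parseval's identity. First I would use the multiplier representation \eqref{eq:IMFoperatorFrequencyDom}, which gives $(\widehat{\mathcal{V}_w^p s})_j = (1-\widehat{w}_j)^p (\widehat{s})_j$, and hence for the difference of two consecutive iterates
\[
\left(\widehat{\mathcal{V}_w^{p+1}s - \mathcal{V}_w^{p}s}\right)_j = \left[(1-\widehat{w}_j)^{p+1} - (1-\widehat{w}_j)^{p}\right](\widehat{s})_j = -(1-\widehat{w}_j)^{p}\,\widehat{w}_j\,(\widehat{s})_j .
\]
Thus at each frequency index $j$ the difference operator acts by multiplication by the scalar $-(1-\widehat{w}_j)^p \widehat{w}_j$, and the whole theorem will follow once I bound $\left|(1-\widehat{w}_j)^p\widehat{w}_j\right|$ uniformly in $j$.

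Next I would invoke the fact, noted right after Definition~\ref{def:filter}, that the Fourier transform of a double convolution filter takes values in $[0,1]$, so that $x := \widehat{w}_j \in [0,1]$ for every $j$. The task then reduces to maximizing the scalar function $f(x) = (1-x)^p x$ over $x\in[0,1]$; since both factors are nonnegative there, the absolute value may be dropped. A routine differentiation shows $f'(x) = (1-x)^{p-1}\left[1-(p+1)x\right]$, so the interior maximum is attained at $x = 1/(p+1)$, with value
\[
f\!\left(\tfrac{1}{p+1}\right) = \left(\frac{p}{p+1}\right)^{p}\frac{1}{p+1} = \left(\frac{p}{p+1}\right)^{p+1}\frac{1}{p} .
\]

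The crux of the argument, and the step I expect to require the most care, is the elementary inequality $\left(\frac{p}{p+1}\right)^{p+1} \le \frac{1}{e}$, equivalently $(1+1/p)^{p+1} \ge e$. This is the classical monotone bound in which the sequence $(1+1/p)^{p+1}$ decreases to $e$ from above; it is essential to use this direction rather than the more familiar $(1+1/p)^p < e$, which points the wrong way and would not close the estimate. Granting it, one obtains $f(x) \le \frac{1}{ep}$ for all $x\in[0,1]$, hence the pointwise frequency bound
\[
\left|\left(\widehat{\mathcal{V}_w^{p+1}s - \mathcal{V}_w^{p}s}\right)_j\right| \le \frac{1}{ep}\,\left|(\widehat{s})_j\right| \quad\text{for every } j .
\]

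Finally I would square this inequality, sum over $j$, and apply Parseval's identity, observing that the normalized $2$-norm $\|\cdot\|_2$ employed in the paper is a fixed multiple of the $\ell^2$-norm of the discrete Fourier coefficients, so that the common normalization constant cancels. This transfers the pointwise multiplier bound into the desired norm estimate $\|\mathcal{V}_w^{p+1}(s)-\mathcal{V}_w^{p}(s)\|_2 \le \frac{1}{ep}\|s\|_2$, completing the argument. The case $p=0$ is trivial, since the right-hand side is then infinite.
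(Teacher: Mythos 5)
Your proof is correct and follows essentially the same route as the argument in the cited source \cite{cicone2020numerical} (the present paper only quotes the theorem): diagonalize $\mathcal{V}_w$ via the DFT, note $\widehat{w}_j\in[0,1]$ for a double convolution filter, maximize the scalar function $(1-x)^p x$ on $[0,1]$ to get $\frac{p^p}{(p+1)^{p+1}}=\left(\frac{p}{p+1}\right)^{p+1}\frac{1}{p}\le\frac{1}{ep}$, and transfer back by Parseval. Your emphasis on using the decreasing bound $(1+1/p)^{p+1}\ge e$, rather than $(1+1/p)^p<e$, is exactly the right point of care, and the argument is complete.
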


As we see, an IMF depend on choices of the filter $w$ and the number of iterations $p$.
Theorem \ref{thm:IF_inner_conv_stopping} provides the theoretical justification for the stopping criterion for the inner loop of the following FIF algorithm. In practice, number of iterations for the inner loop is much smaller than the theoretical bound.

\begin{algorithm}
\caption{\textbf{Fast Iterative Filtering} IMF = FIF$(s)$,  $\delta >0$}\label{algo:FIF}
\begin{algorithmic}
\STATE IMF = $\left\{\right\}$
\WHILE{the number of extrema of $s$ $\geq 2$}
      \STATE  compute the filter length $l$ for $s$ and the corresponding filter $w$
      \STATE  $\widehat{s}=\textrm{FFT}(s)$
      \STATE  $\widehat{w}=\textrm{FFT}(w)$
    \WHILE{$\|\widehat{s_{m+1}}-\widehat{s_m}\|_2>\delta\|s\|_2$}
                  \STATE  $(\widehat{s_{m+1}})_j = (\widehat{\mathcal{I}_{w,m}s})_j$, for $j=0,...,n-1$

                  \STATE  $m = m+1$
      \ENDWHILE
      \STATE IMF = IMF$\,\cup\,  \{ \textrm{iFFT}\left(\widehat{s}_{m}\right)\}$
      \STATE $s=s-\textrm{iFFT}\left(\widehat{s}_{m}\right)$
\ENDWHILE
\STATE IMF = IMF$\,\cup\,  \{ s\}$
\end{algorithmic}
\end{algorithm}

Regarding the filter, we choose a double convolution filter $w_0$ like the Fokker-Plank filters \cite{cicone2016adaptive}, and for the length $l$ estimation, following \cite{lin2009iterative}, we compute
\begin{equation}\label{eq:Unif_Mask_length}
l:=2\left\lfloor\chi \frac{n}{k}\right\rfloor
\end{equation}
where $n$ is the length of the vector $s$, $k$ is the number of its extrema, $\chi$ is a tuning parameter choose between 1.1 and 2, and usually fixed to 1.6, and $\left\lfloor \cdot \right\rfloor$ rounds a positive number to the nearest integer closer to zero. We point out that this formula computes some sort of average highest frequency contained in $s$.

Another possible approach could be using the Fourier spectrum of $s$ and the identification of its highest frequency peak. The filter length $l$ can be chosen to be proportional to the reciprocal of this value.

The Fokker-Plank filters, as well the approaches just described for the computation of the filter length $l$ are pretty standard, we welcome the readers to design different  filters.

The computation of the filter length $l$ is a crucial step of the FIF technique \cite{cicone2022oneortwo}. Clearly, $l$ is strictly positive and, more importantly, it is based solely on the signal itself. This last property makes the method nonlinear.

In fact, if we consider two distinct signals $s$ and $z$ of (the same length over the same time interval with the same sampling rate),  then in general there is no relationship between the filter length associated with $s$, $z$, and $s+z$. Hence, if
$\IMF_1(\bullet)$ represents the first IMF extracted by FIF from a signal, then

$$\IMF_1(s+z)\neq \IMF_1(s)+\IMF_1(z).$$

\section{New theoretical insights in nonstationary signals decomposition and their TFR}\label{sec:IMFogram}

In this section we present new theoretical insights in the decompositions produced using FIF--based methods and on the time-frequency representation for nonstationary signal based on the IMFs. Using these results, we propose a new tool to study nonstationary signals named IMFogram, which was first mentioned in \cite{Barbe2022time}, but was not discussed in detail.

It has been well established that the $2$-norm represent the energy for a signal, either discrete or continuous. It follows from the Parseval-Plancherel identity that Fourier transform is an isometry, therefore the $2$-norm of an $L_2$ signal is the same as the $2$-norm of its Fourier transform. Here we propose a new energy norm for a signal, the $L_1$ Fourier Energy, when it is finite.

\begin{definition}[$L_1$ Fourier Energy of a signal $s$]\label{def:L1_Energy}
Given a signal $s$, its $L_1$ \textit{Fourier Energy} is defined as
\begin{equation}\label{eq:L1_Energy}
E_{1}(s)=||\widehat{s}||_1.
\end{equation}
In particular, if $s=(s_j)\in\mathbb{R}^n$, then $E_1(s) =
\displaystyle{\sum_k |\widehat{s}(\xi_k)|}$ where $\xi_k$ is defined as in (\ref{eq:xi_k}).
\end{definition}

\begin{definition}[Conservation of $L_1$ Fourier Energy]\label{def:L1_Energy}
Let $s\in\mathbb{R}^n$ and $s=\sum_k \phi_k$ be a decomposition of $s$. We say that the decomposition conserves the signal $L_1$ Fourier Energy if and only if
\begin{equation}\label{eq:Conservation_L1_Energy}
E_{1}(s)=\sum_k E_{1}(\phi_k)
\end{equation}
\end{definition}

\begin{definition}[Unwanted oscillations] \label{def:Unwanted_oscillations}
Let $s\in\mathbb{R}^n$ and $s=\sum_k \phi_k$ be a decomposition of $s$. The decomposition $\{\phi_k\}$ contains unwanted oscillations if there exists $\xi$ such that
\begin{equation}\label{eq:Unwanted_oscillations}
    \sum_k \left|\widehat{\phi}_k(\xi)\right| > \left|\widehat{s}(\xi)\right|
\end{equation}
\end{definition}

Our new theoretical insight is that Algorithm \ref{algo:FIF} will not produce any unwanted oscillation and the decomposition preserves the $L_1$ Fourier Energy norm.

\begin{theorem}\label{thm:Energy_conservation}
Let $s\in \mathbb{R}^n$ and $\delta >0$. Apply Algorithm \ref{algo:FIF} with $w$ a double convolution filter, and we have $s=\displaystyle{\sum_1^m \textrm{IMF}_k + r}$, where $r$ is a trend. Then this decomposition preserves the $L_1$ Fourier energy and produces no unwanted oscillations.
\end{theorem}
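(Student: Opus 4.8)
The plan is to pass entirely to the frequency domain and exploit the fact that, for a double convolution filter, the Fourier multiplier attached to each extracted IMF is a \emph{real number in $[0,1]$} at every frequency. I would first unroll the two nested loops of Algorithm \ref{algo:FIF}. Write $s^{(1)}=s$, and let $s^{(i)}$ denote the residual signal entering the $i$-th outer iteration, with its own filter $w^{(i)}$ (recomputed from $s^{(i)}$) and a finite number $p_i$ of inner iterations fixed by the stopping test $\|\widehat{s_{m+1}}-\widehat{s_m}\|_2\le\delta\|s\|_2$. By \eqref{eq:IMFoperatorFrequencyDom}, and because the algorithm sets $s^{(i+1)}=s^{(i)}-\IMF_i$ exactly, the extracted component and the updated residual read, frequency by frequency,
\[
\widehat{\IMF}_i(\xi)=\bigl(1-\widehat{w^{(i)}}(\xi)\bigr)^{p_i}\,\widehat{s^{(i)}}(\xi),\qquad
\widehat{s^{(i+1)}}(\xi)=\Bigl(1-\bigl(1-\widehat{w^{(i)}}(\xi)\bigr)^{p_i}\Bigr)\,\widehat{s^{(i)}}(\xi).
\]

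The key step is the observation that the multiplier $a_i(\xi):=\bigl(1-\widehat{w^{(i)}}(\xi)\bigr)^{p_i}$ lies in $[0,1]$ for every $\xi$. This is exactly where the double convolution hypothesis enters: as noted after Definition \ref{def:filter}, $\widehat{w^{(i)}}\in[0,1]$, hence $1-\widehat{w^{(i)}}\in[0,1]$, so $a_i\in[0,1]$ and likewise $1-a_i\in[0,1]$. Because $a_i(\xi)$ and $1-a_i(\xi)$ are \emph{nonnegative real} scalars multiplying the same complex number $\widehat{s^{(i)}}(\xi)$, their moduli add without cancellation:
\[
\bigl|\widehat{\IMF}_i(\xi)\bigr|+\bigl|\widehat{s^{(i+1)}}(\xi)\bigr|
=\bigl(a_i(\xi)+1-a_i(\xi)\bigr)\,\bigl|\widehat{s^{(i)}}(\xi)\bigr|
=\bigl|\widehat{s^{(i)}}(\xi)\bigr|.
\]
I would stress that this identity holds for the finite $p_i$ actually produced by the inner loop; no convergence of the inner iteration is invoked, so the parameter $\delta$ plays no role in the conservation.

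From here I would telescope over the outer iterations $i=1,\dots,m$. Since the outer loop halts once $s^{(m+1)}=r$ has fewer than two extrema (the trend), iterating the last display gives, at every frequency $\xi$,
\[
\bigl|\widehat{s}(\xi)\bigr|=\sum_{k=1}^{m}\bigl|\widehat{\IMF}_k(\xi)\bigr|+\bigl|\widehat{r}(\xi)\bigr|.
\]
Both conclusions follow from this single pointwise identity. Taking the components of the decomposition to be $\{\IMF_1,\dots,\IMF_m,r\}$, the right-hand side equals $\sum_k|\widehat{\phi}_k(\xi)|$, which never exceeds $|\widehat{s}(\xi)|$ (indeed equals it), so no unwanted oscillation in the sense of Definition \ref{def:Unwanted_oscillations} is created. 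Summing the identity over the discrete frequencies $\xi_k$ of \eqref{eq:xi_k} and using \eqref{eq:L1_Energy} yields
\[
E_1(s)=\|\widehat{s}\|_1=\sum_{k=1}^{m}\|\widehat{\IMF}_k\|_1+\|\widehat{r}\|_1=\sum_{k=1}^{m}E_1(\IMF_k)+E_1(r),
\]
which is precisely the conservation \eqref{eq:Conservation_L1_Energy} of the $L_1$ Fourier energy. I expect essentially no deep obstacle here: the only place demanding care is verifying that the per-frequency multiplier stays in $[0,1]$, so the main (mild) difficulty is making the double convolution hypothesis do its work cleanly and confirming that recomputing a different filter $w^{(i)}$ at each outer stage does not disturb the telescoping.
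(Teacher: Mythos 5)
Your proposal is correct and takes essentially the same approach as the paper: both arguments work frequency by frequency and hinge on the fact that, for a double convolution filter, each multiplier $(1-\widehat{w}_k(\xi))^{p_k}$ and its complement lie in $[0,1]$, so moduli combine without cancellation. The only difference is organizational — you telescope the per-stage identity $|\widehat{\IMF}_i(\xi)|+|\widehat{s^{(i+1)}}(\xi)|=|\widehat{s^{(i)}}(\xi)|$, while the paper writes out the closed-form product multipliers $f_k(\xi)$ attached to each IMF and observes that they are nonnegative and sum to one — which is the same computation presented in a slightly different order.
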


\begin{proof}
Following Algorithm \ref{algo:FIF}, let $s^{(1)}=s$, and $w_1$ be the associated double convolution filter. Then
\begin{equation*}
    \IMF_1 = \mathcal{I}_{w_1,p_1}(s^{(1)}) \label{eq:IMF_1}
\end{equation*}
for some positive integer $p_1$ determined by $\delta$,
and
\begin{equation*}
    \widehat{\IMF}_1(\xi) = (1-\widehat{w}_1(\xi))^{p_1} \widehat{s^{(1)}}(\xi).
\end{equation*}
Inductively, for $k = 1, 2, ..., m-1$, define
\begin{equation*}
    s^{(k+1)} = s^{(k)} - \IMF_k, \quad
    \widehat{\IMF}_{k+1}(\xi) = (1-\widehat{w_{k+1}}(\xi))^{p_{k+1}} \widehat{s^{(k+1)}}(\xi),
\end{equation*}
where $w_k$ is the double convolution filter associated with $s^{(k)}$ and $p_k$ is associated with the stopping criterion $\delta$. It follows immediately that for $k=1, 2, \dots, m$
\begin{equation*}
    \widehat{\IMF}_{k}(\xi) =\widehat{s}(\xi) (1-\widehat{w_{k}}(\xi))^{p_{k}}\prod_{j=1}^{k-1}\left(1 - (1-\widehat{w}_j(\xi))^{p_j}\right).
\end{equation*}
Define $f_k(\xi)= (1-\widehat{w_{k}}(\xi))^{p_{k}}\prod_{j=1}^{k-1}\left(1 - (1-\widehat{w}_j(\xi))^{p_j}\right) $ and $f_0(\xi) = \widehat{r}(\xi)$. Observe that
\begin{equation*}
    \sum_{k=0}^m f_k(\xi) = 1, \quad 0\le \sum_{k=1}^m f_k(\xi) \le 1, \quad \textrm{for all sampling points } \xi,
\end{equation*}
therefore, $0\le f_0(\xi) = \widehat{r}(\xi)\le 1$. It follows immediately that
\begin{equation*}
    \sum_k \left|\widehat{\IMF}_k(\xi)\right| = \sum_{k=1}^m f_k(\xi)  \left|\widehat{s}(\xi)\right|\le \left|\widehat{s}(\xi)\right|.
\end{equation*}
The conservation of the $L_1$ Fourier Energy follows from
\begin{align*}
\sum_{k=0}^m ||\widehat{\IMF}_k||_1 &=\sum_{k=0}^m\sum_j f_k(\xi_j)\left|\widehat{s}(\xi_j)\right| = \sum_j \sum_{k=0}^m
f_k(\xi_j)\left|\widehat{s}(\xi_j)\right| \\
&=\sum_j \left|\widehat{s}(\xi_j)\right| = ||\widehat{s}||_1
\end{align*}
\end{proof}

An important result that follows from Theorem \ref{thm:Energy_conservation} is that, intuitively speaking, the Fourier transforms of the first IMF contains, for every fixed frequency $\xi$, a percentage, precisely $f_1(\xi)=(1-\widehat{w}_1(\xi))^{p_1}$, of $\widehat{s}(\xi)$, i.e. the original signal Fourier transforms at frequency $\xi$. Subsequent IMFs contains at $\xi$ a percentage of the leftover signal Fourier transforms in $\xi$. For instance the second IMF Fourier transform contains at $\xi$ the percentage $f_2(\xi)=(1-\widehat{w}_2(\xi))^{p_2}\left(1 - (1-\widehat{w}_1(\xi))^{p_1}\right)$, which is the $(1-\widehat{w}_2(\xi))^{p_2}$ of the percentage $\left(1 - (1-\widehat{w}_1(\xi))^{p_1}\right)$ leftover in the signal after subtracting the first IMF. The very same reasoning applies to all the subsequent IMFs. This observation allows to state in an intuitive way that, when we add together all the IMFs from the first to the $m$-th one, the
$\sum_{k=1}^m \widehat{\IMF}_k(\xi)$ is given by the complex number $\widehat{s}(\xi)$ multiplied by a real number in the interval $[0,\ 1]$. Furthermore, this number is strictly monotonically increasing and tends to 1 as $m\in\N$ grows.

Furthermore, this theorem justify our definition of $L_1$ Fourier Energy, energy conservation, and of what it can be considered as an unwanted oscillation potentially produced by FIF methods.  This implies also that the conservation of $L_1$ Fourier Energy in FIF--based methods is all we need  to guarantee the meaningfulness of the decomposition.

\subsubsection{Example of unwanted oscillations: case of FIF with a bad choice for the filter}

If we run FIF with a filter which is not double convolution filter we may end up introducing unwanted oscillations in the decomposition.

\begin{figure}
\centering
\includegraphics[width=0.29\linewidth]{./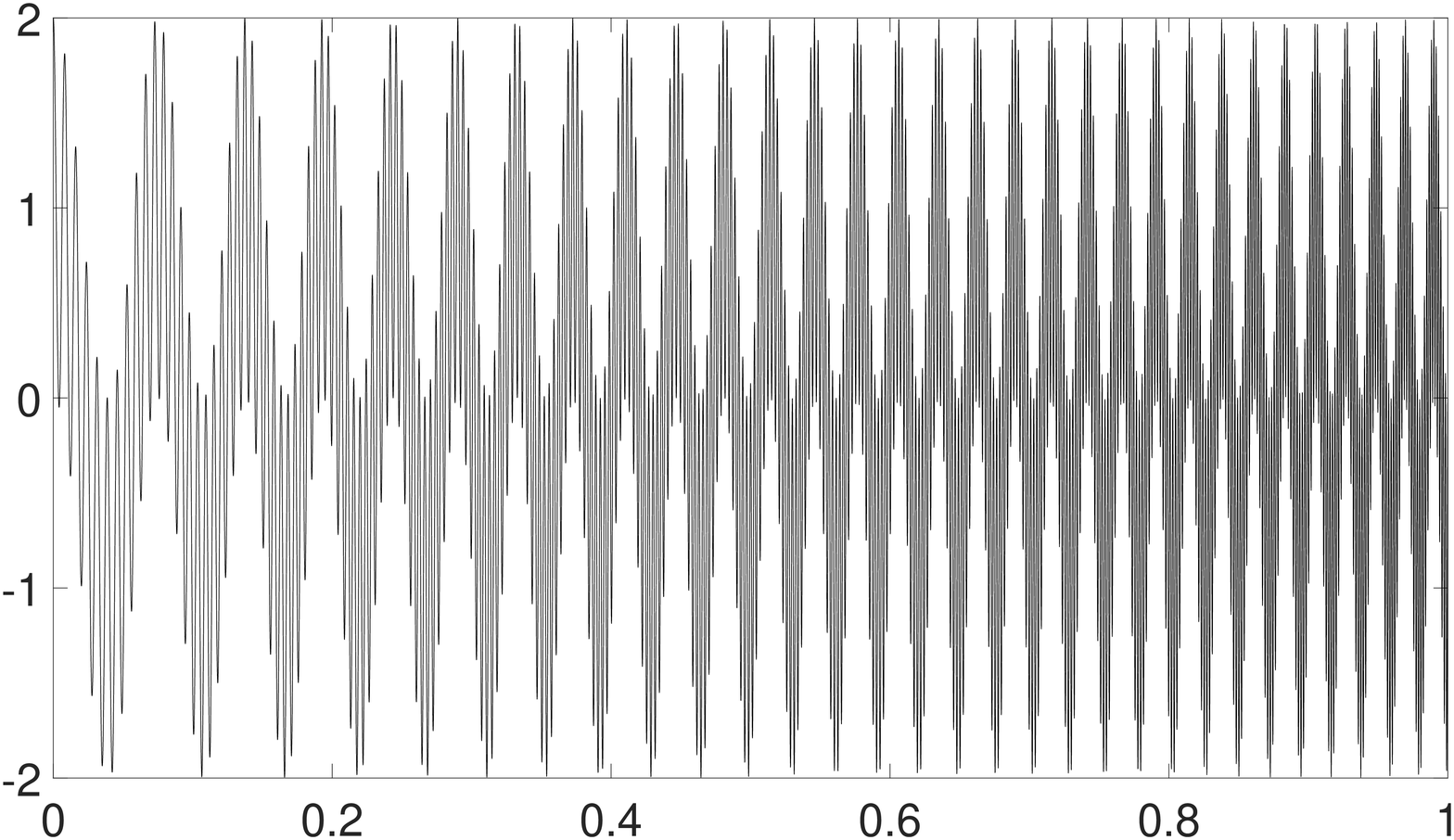}~\includegraphics[width=0.34\linewidth]{./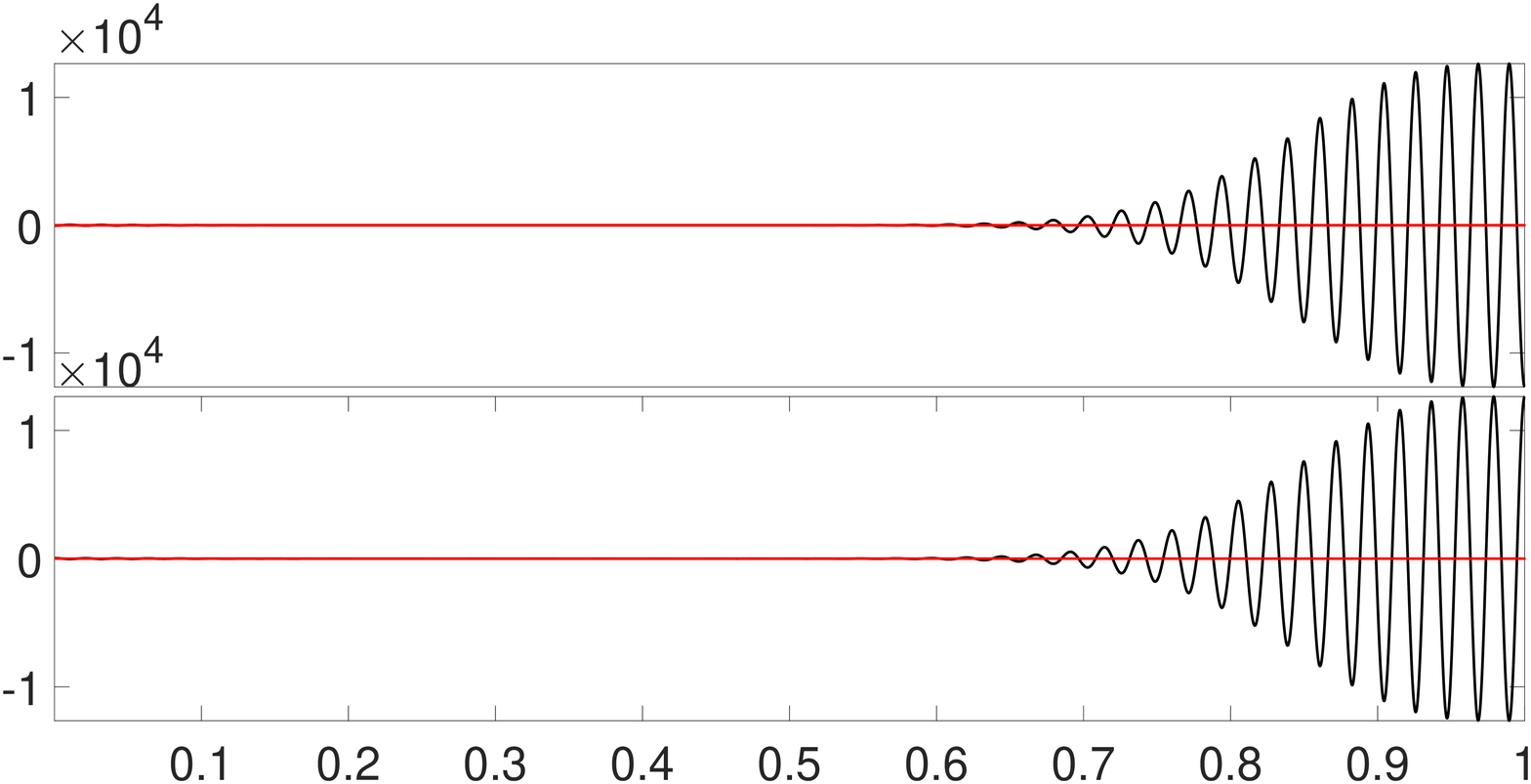}~\includegraphics[width=0.34\linewidth]{./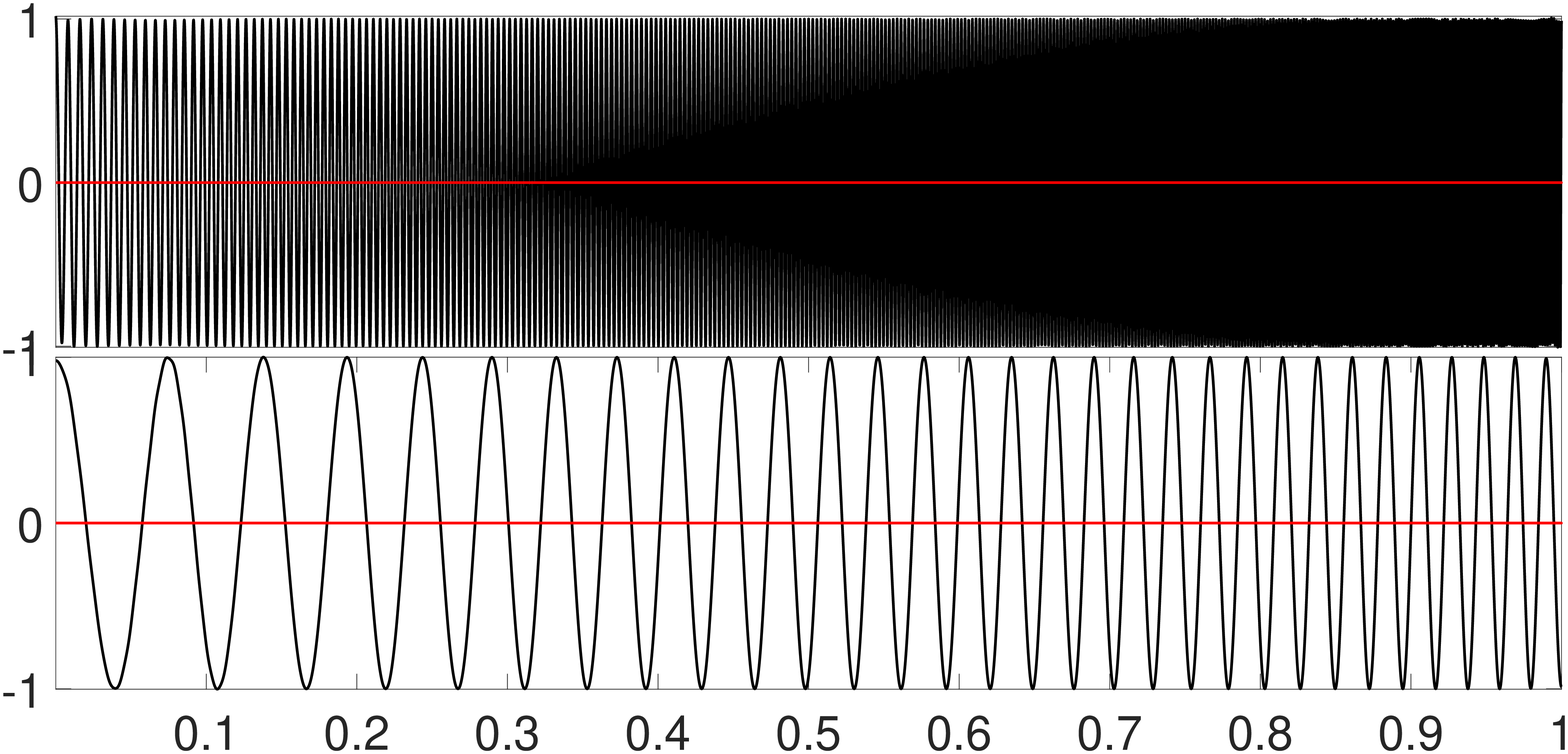}
\caption{First example of unwanted oscillations: FIF run with a bad choice of the filter. Left panel, signal. Central panel, FIF decomposition using a Fokker-Planck filter without convolving it with itself. Right panel, FIF decomposition using a filter obtained as convolution of a Fokker-Planck filter with itself.}
\label{fig:Unwanted_Case_1_fig1}
\end{figure}

\begin{figure}
\centering
\includegraphics[width=0.49\linewidth]{./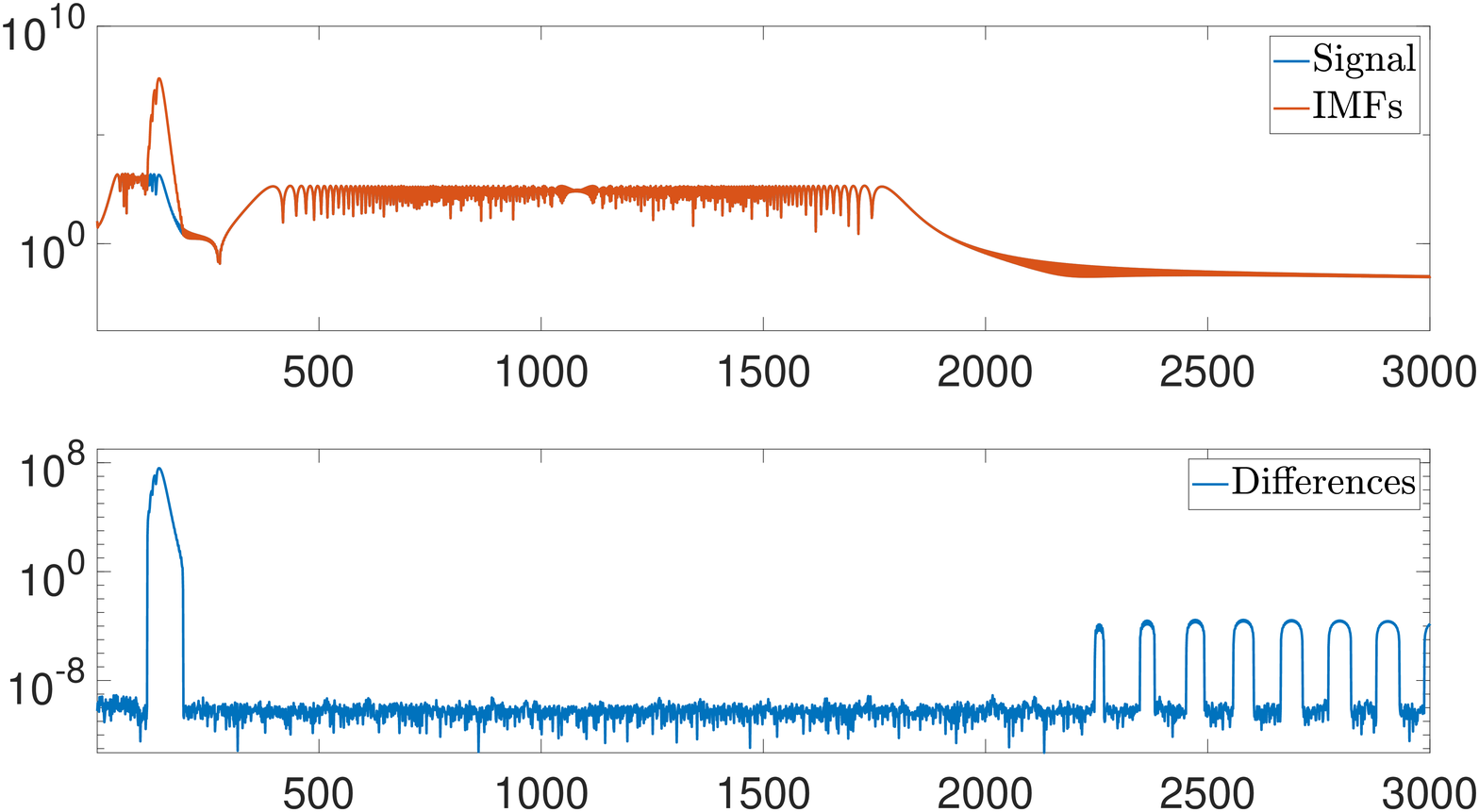}~\includegraphics[width=0.49\linewidth]{./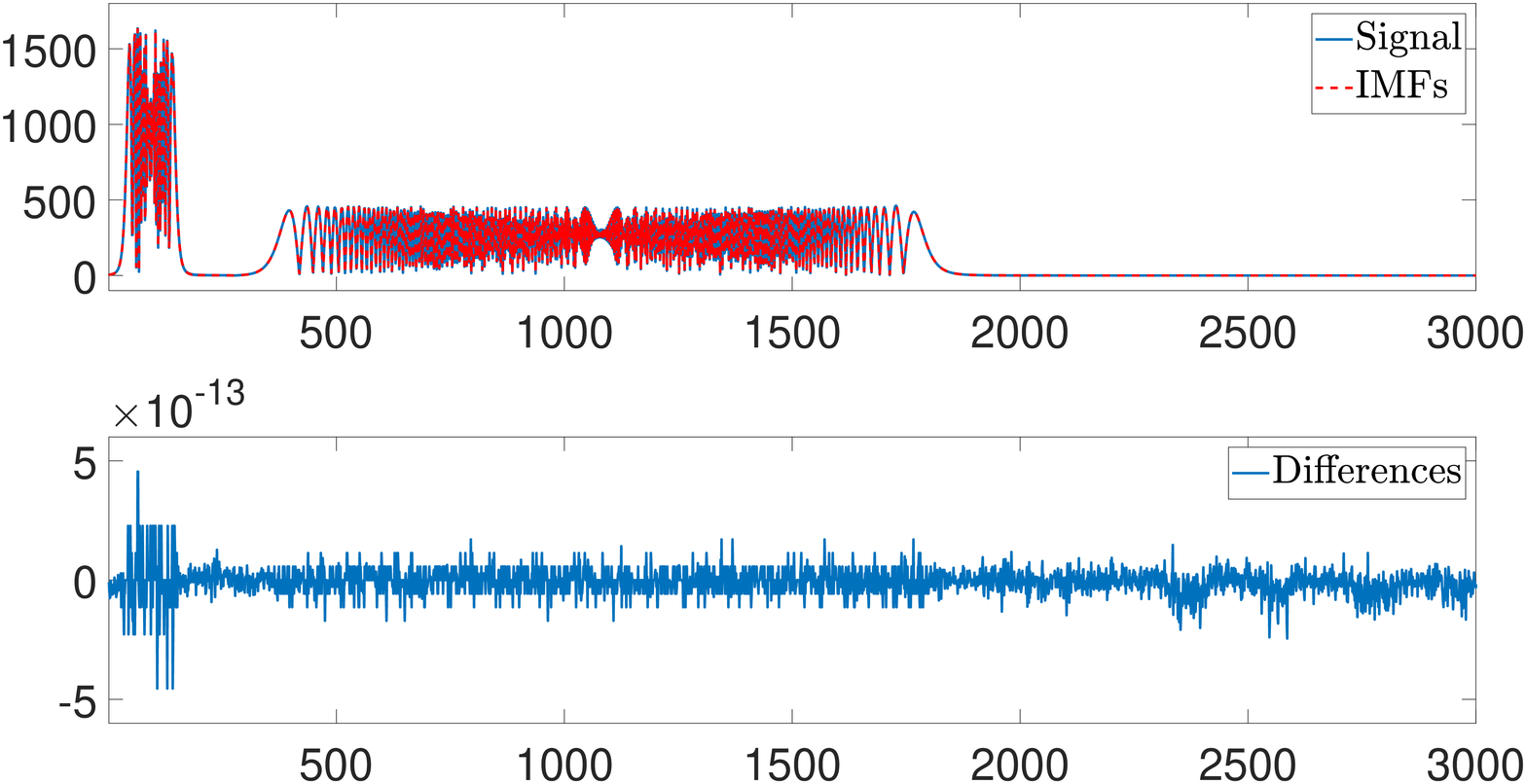}
\caption{First example of unwanted oscillations. First row: the absolute value of the signal FFT compared with the sum of the absolute values of the IMFs FFT. Left panel, FIF with a bad choice of the filter, right panel, FIF with a double convolution filter.
Second row the differences between the absolute value of the signal FFT and the sum of the absolute values of the IMFs FFT, left when the filter does not guarantee the convergence of FIF, right when the filter is chosen appropriately.}
\label{fig:Unwanted_Case_1_fig2}
\end{figure}

In Figure \ref{fig:Unwanted_Case_1_fig1}, left panel, we plot the signal $s(t)$ obtained as composition of the two non-stationary components
\begin{eqnarray}\label{eq:Example1}
\nonumber  s_1(t) &=& \cos\left(480\pi t^2+240\pi t\right)\\
           s_2(t) &=& \cos\left(\pi t^3+36\pi t^2+24\pi t+2\pi\right),
\end{eqnarray}
where $t\in [0,\,1]$.

In Figure \ref{fig:Unwanted_Case_1_fig1}, central panel, it is shown the FIF decomposition into two IMFs obtained using a Fokker-Planck filter \cite{cicone2016adaptive}, which has not been convolved with itself.
It is already evident from this plot that the algorithm is not converging to a meaningful solution and that unwanted oscillations become present. If we look at the absolute values of the Fourier coefficients of the original signal versus the summation of the absolute values of the Fourier coefficients of two IMFs produced, Figure \ref{fig:Unwanted_Case_1_fig2} left column, we have the confirmation that the $L_1$ Fourier Energy of the signal is clearly not conserved in this case.

If, instead, we consider the same Fokker-Planck filter used in the previous passage and, before applying it in the FIF algorithm, we convolve it with itself, then, by Theorem \ref{thm:Energy_conservation}, we are guaranteed a priori that the $L_1$ Fourier Energy of the signal will be conserved and, even more importantly, no unwanted oscillation will appear. This is confirmed by looking at the numerical results plotted in Figure \ref{fig:Unwanted_Case_1_fig1} right panel, where the newly produced IMFs are depicted and no unwanted oscillations are visible naked eyes. Furthermore, the absence of unwanted oscillations is confirmed also by looking at the absolute values of the Fourier coefficients of the original signal versus  the summation of the absolute values of the Fourier coefficients of these two newly obtained IMFs, Figure \ref{fig:Unwanted_Case_1_fig2} right column. The differences between these two curves, which is plotted in the bottom row, second column of Figure \ref{fig:Unwanted_Case_1_fig2}, is around machine precision.

\subsubsection{Example of unwanted oscillations: case of EMD decomposition}

The previous definition of unwanted oscillations has been tailored on IF--based technique decompositions, but it can be tested also on other signal decompositions. For instance, EMD algorithm is well known to be unstable in presence of noise \cite{huang2009EEMD}. In this example, we compare the FIF and EMD decomposition of a nonstationary signal $s(t)=s_1(t)+s_2(t)$, where $s_1$ and $s_2$ are defined in \eqref{eq:Example2}, perturbed by a component $s_3(t)$, which is stationary in frequency at 200 Hz, and whose amplitude $A(t)$ is a white Gaussian noise distribution whose four statistical momenta are $\mu=0$, $\sigma=0.18$, skewness 0.17 and kurtosis 6.
\begin{eqnarray}\label{eq:Example2}
\nonumber  s_1(t) &=& \cos\left(50\pi t^2+100\pi t\right)\\
           s_2(t) &=& \cos\left(-10\pi t^2+40\pi t\right)\\
           s_3(t) &=& A(t) \cos(400\pi t)
\end{eqnarray}
where $t\in [0,\,1]$.

\begin{figure}
\centering
\includegraphics[width=0.29\linewidth]{./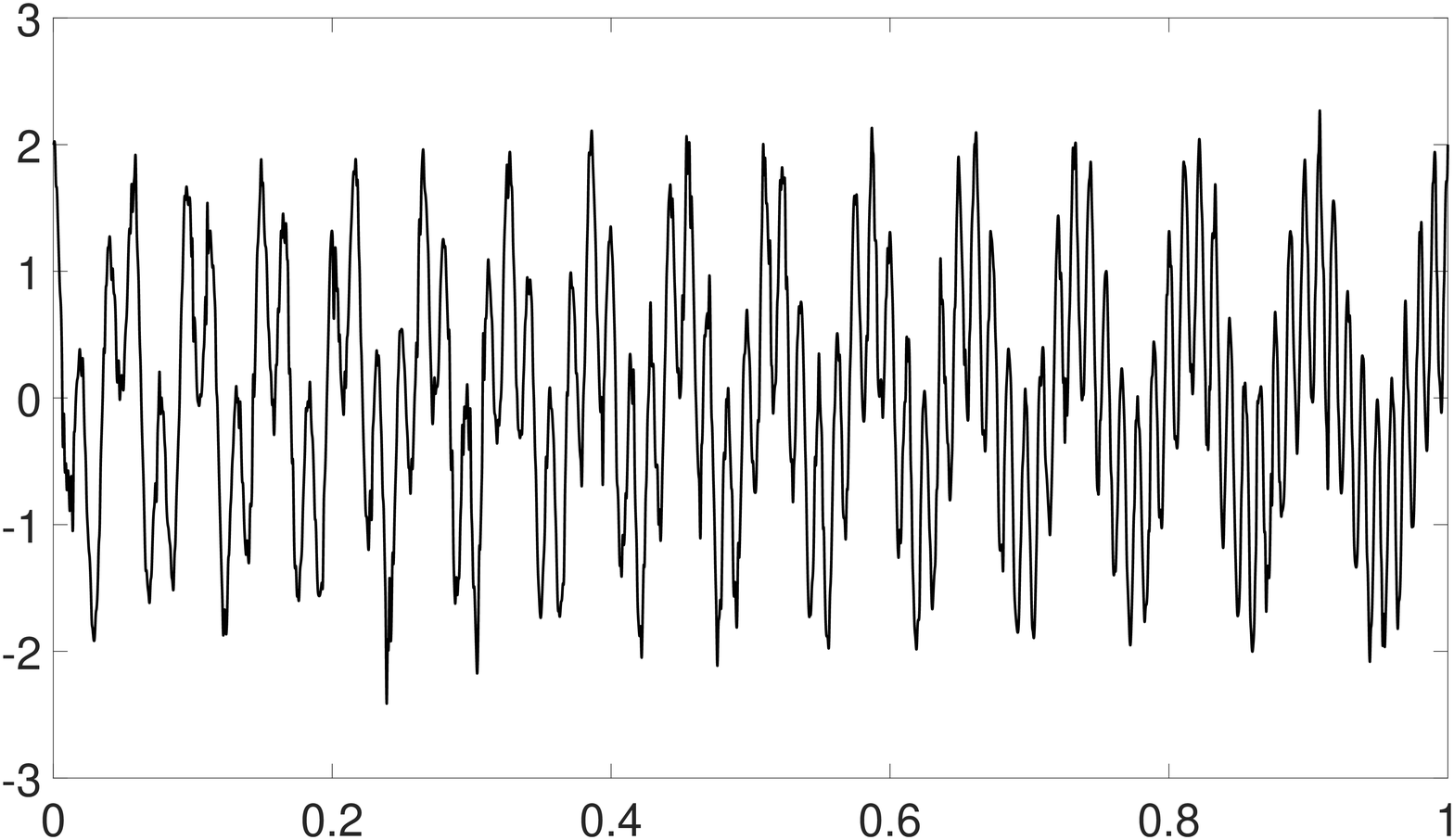}~\includegraphics[width=0.34\linewidth]{./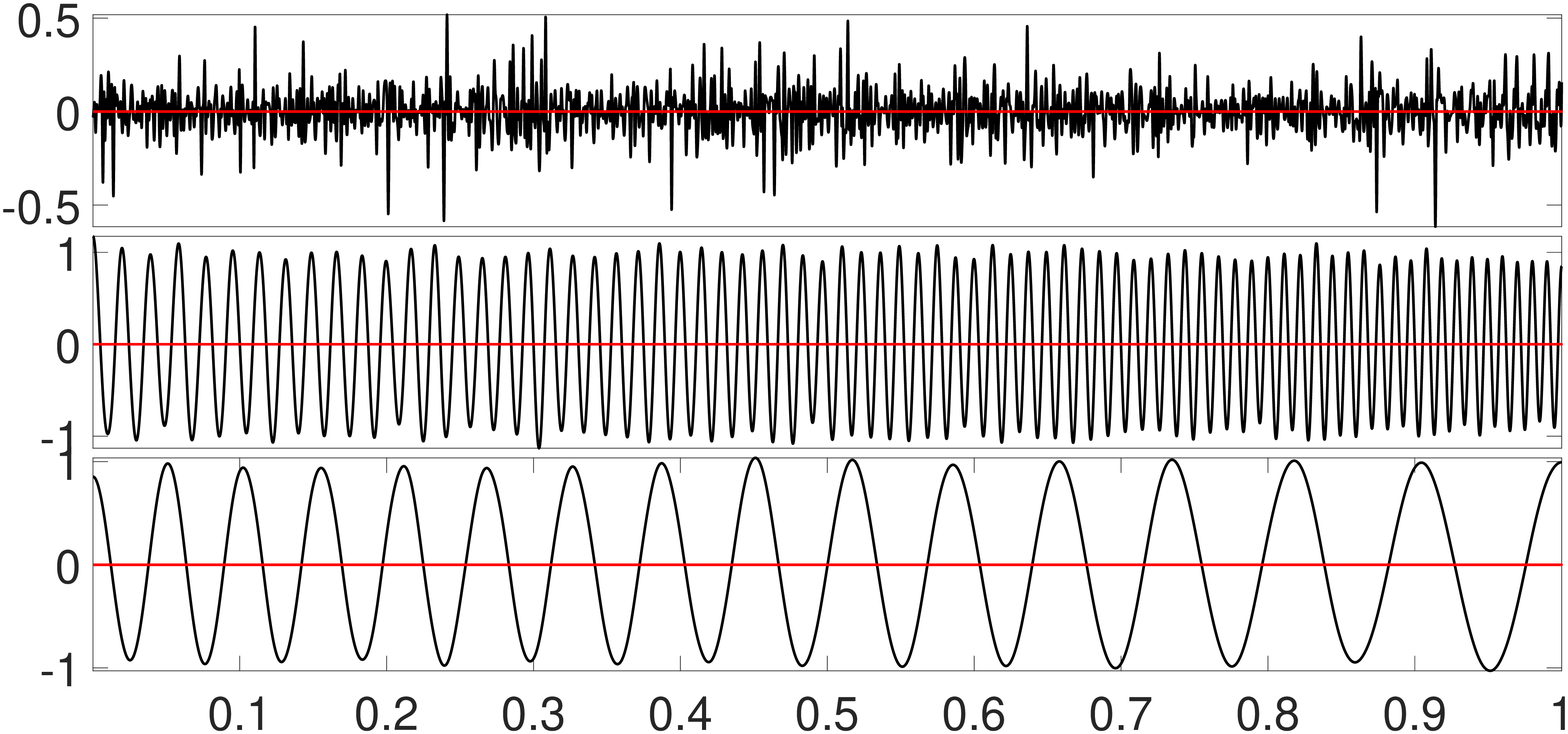}~\includegraphics[width=0.34\linewidth]{./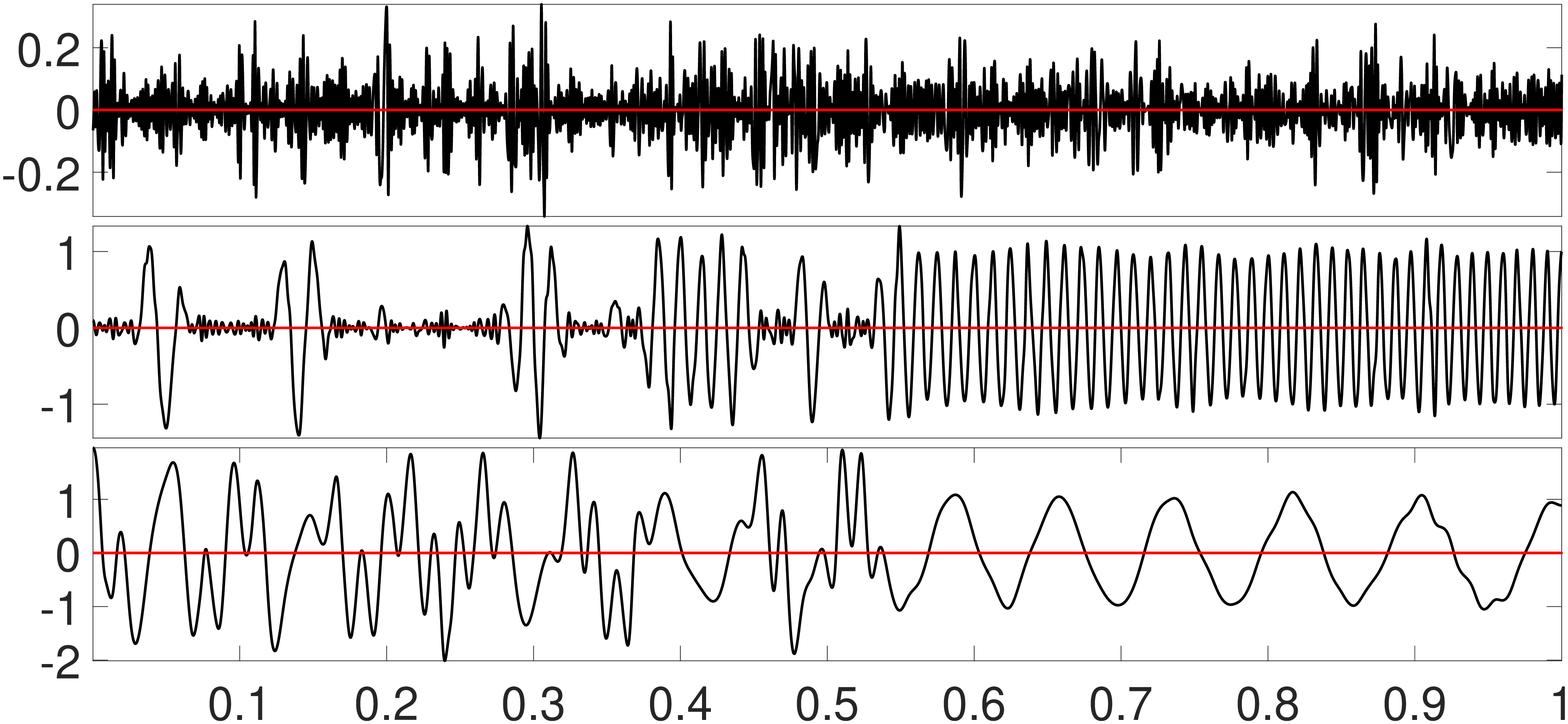}
\caption{Second example of unwanted oscillations. Left panel, the signal. Central panel, FIF decomposition with a double convolution filter. Right panel, EMD decomposition.}
\label{fig:Unwanted_Case_2_fig1}
\end{figure}

\begin{figure}
\centering
\includegraphics[width=0.5\linewidth]{./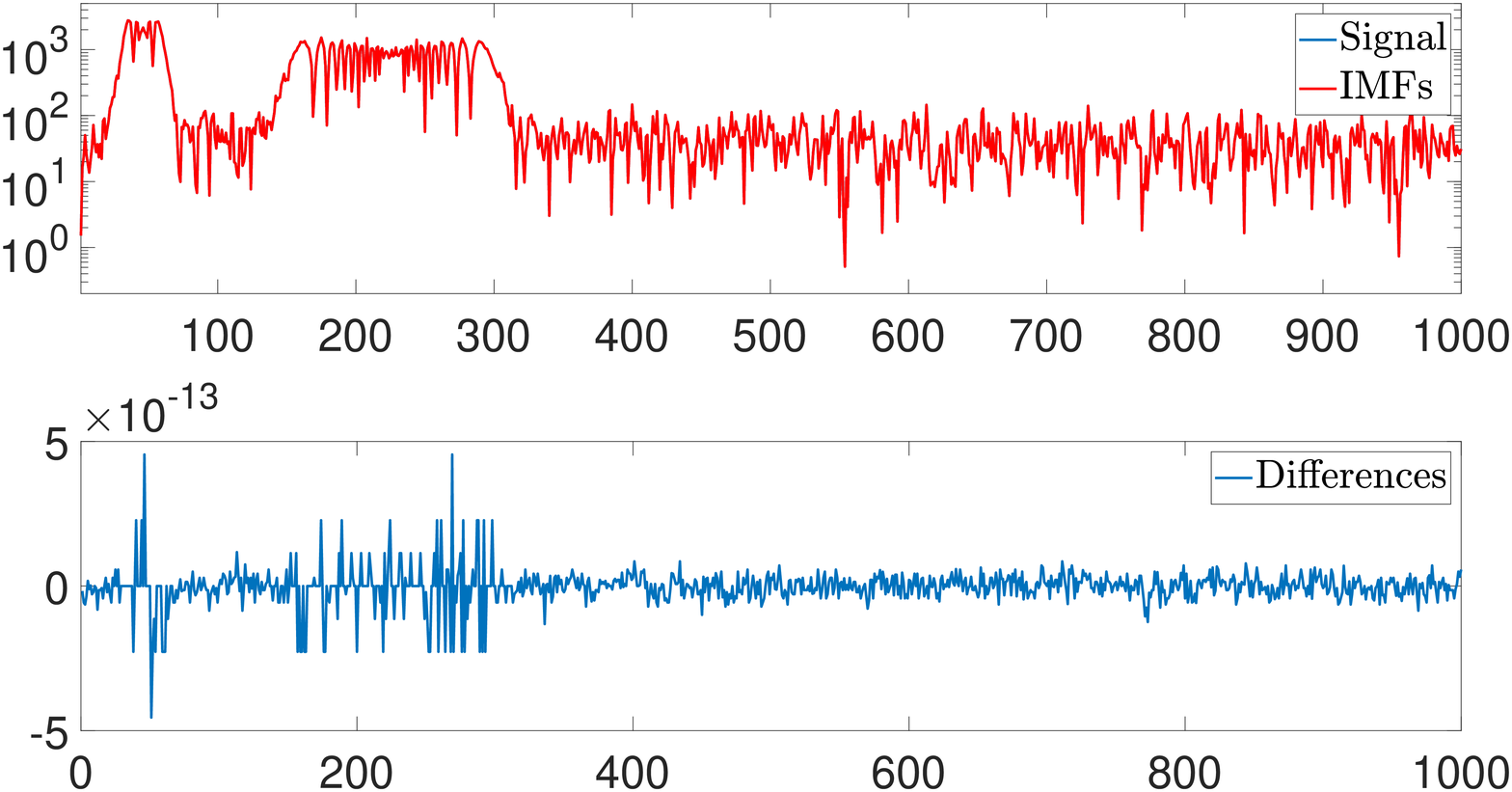}~\includegraphics[width=0.5\linewidth]{./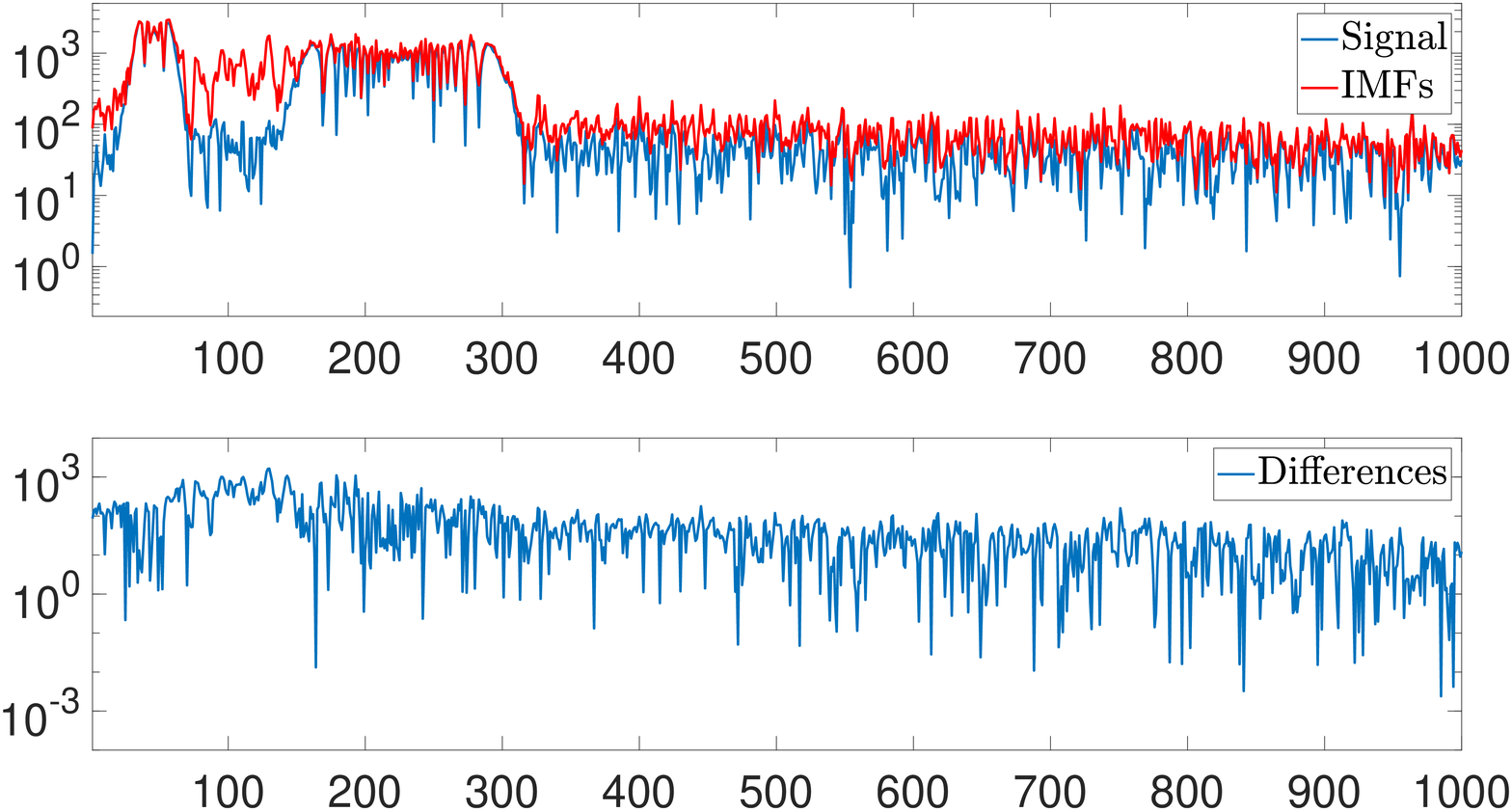}
\caption{Second example of unwanted oscillations. The absolute value of the signal FFT compared with the sum of the absolute values of the IMFs FFT when we apply FIF or EMD decomposition, respectively left and right columns.}
\label{fig:Unwanted_Case_2_fig2}
\end{figure}

From the rightmost panel of Figure \ref{fig:Unwanted_Case_2_fig1}, we can see that a mode mixing problem arises when we apply EMD to decompose the signal. This is due to the local instability of the decomposition that affects the classical EMD method \cite{huang2009EEMD}. Many generalizations of the EMD have been proposed in the last decade to fix this issue, as we mentioned in the introduction. It is important to recall here that the IF and FIF algorithms are not prone by construction to mode mixing. In Figure \ref{fig:Unwanted_Case_2_fig2} we plot the absolute values of the signal FFT as well as the sum of the absolute values of the IMFs FFT, for both the FIF and EMD decomposition, as well as their differences, bottom row. From these results, it is evident that FIF, as expected from the theory, conserves the $L_1$ Fourier Energy of the signal, whereas EMD decomposition is not conserving it. However, we remind that this result does not allow us to derive any further conclusion on the EMD, more than the mode mixing is reflecting, as expected, also in frequency domain.

\subsection{On the time-frequency representation of the IMFs}

In this section, we present the so called IMFogram technique for the time-frequency representation of signals, which was first introduced in \cite{Barbe2022time}. Here we will give a more in-depth analysis of IMFogram and comparisons to other time-frequency representations.

It is by design that each IMF has a small range of frequency with relatively large amplitude. With this in mind, we start by defining the \emph{local amplitude}, and \emph{local frequencies} of an IMF.

From now on we make the assumption that each component contained in the signal presents only interwave modulation, i.e. modulation from one period to the next one, in frequency and amplitude and not intrawave modulation, i.e. modulation inside each period. This is a strong assumption, however it allows to build a first kind of time frequency representation based on IMFs which, as we will prove, converges to the spectrogram representation for a certain class of signals. In fact classical methods based on Fourier and wavelet transforms have implicitly the same limitation. Developing a time-frequency representation based on IMFs that can capture the intrawave modulations contained in them remains an open problem which deserve to be studied in a future work.

\begin{definition}[Instantaneous amplitude and Instantaneous frequency function for an IMF]\label{def:IA_IF}
Fix a $\IMF=(\textrm{IMF}(t_j))_0^{n-1}$.
\begin{itemize}
\item[(1)]
Let $g$ be the linear interpolation of the local maximum of $|\textrm{IMF}|$. The {\it instantaneous amplitude  function} for this IMF, is defined as
$iA^{\IMF}:[t_0,t_{n-1}]\to \mathbb{R}$,  the linear interpolation of $(t_j, iA^{\IMF}(t_j))$, where $iA^{\IMF}(t_j)=\max\{g(t_j), |\IMF(t_j)|\}$.

\item[(2)] Let $f$ be the linear interpolation of $\IMF(t_j)$ over the interval $[t_0, t_{n-1}]$, and denote the zero crossings of $f$ by $\{z_1< z_2< \dots<z_p\}$. For $j=1, 2, \dots, p$, define $y_j=\frac{1}{z_{j+1}-z_j}$, $z_0 = t_0$, and $z_{p+1}=t_{n-1}$, $y_0=y_1$, $y_{p+1}=y_p$. The {\it instantaneous frequency function}, $iF^{\IMF}:[t_0,t_{n-1}]\to \mathbb{R}$ is the linear interpolation of $\{(z_j, 2y_j)\}_{j=0}^{p+1}$.
\end{itemize}
The instantaneous amplitude and instantaneous frequency of $\IMF$ at $t_j$ is $iA^{\IMF}(t_j)$ and $iF^{\IMF}(t_j)$ respectively. By slightly abuse of notation, if we identify the discrete sequence $\IMF$ with its linear interpolation function, the instantaneous amplitude and instantaneous frequency of $\IMF$ at $t\in [t_0, t_{n-1}]$ is simply $iA^{\IMF}(t)$ and $iF^{\IMF}(t)$ respectively.
\end{definition}

We observe that the choice of the envelope in the computation of the instantaneous amplitude and frequency is not stringent for the IMFogram method, since all is needed in this context is the computation of a local amplitude and frequency which is produced as an average of the instantaneous values. In the following, to construct the envelopes, we use linear interpolation for simplicity.

As one knows, the concept of instantaneous frequency and amplitude are not universally accepted. The purpose of the last definition is to fix the terminology and not to start a philosophical discussion. Our goal is to use the instantaneous frequency and amplitude to define a version of  local frequency and amplitude.

Fix the size of the time window to be the positive integer $J$.
Consider the $n\times n$ matrix (for simplicity, we assume that $n$ is a multiple of $J$) $T$, the averaging matrix, defined as
\[ T_{i,k} = \begin{cases}
1/J, \qquad 1+(p-1)J\le i, k\le pJ, \text{ for } p = 1, 2, \dots, n/J\\
0, \qquad otherwise. \end{cases}
\]
where $T_{i,k}$ is the $(i, k)$ - entry of $T$. The operator norm for $T$ is $\|T\|=1$, with respect to the normalized $2$-norm.

Similarly for the $2$-norm of a localized function. Let $s=(s_j)$ be a vector in $\mathbb{R}^n$ and $t_j$ is defined as in \eqref{eq:t_j} if $[a,b]\subset [0,L]$ in the time domain, with $a=t_i$ and $b=t_{i+k}$, then the local $2$-norm of $s$ over $[a,b]$ is
\begin{equation}
    \|s|_{[a,b]}\|_2=\frac{L}{n}\left( \sum_{0\le j\le k} |s(t_{i+j})|^2 \right)^{1/2}\label{eq:localnorm}
\end{equation}

Next, we define the local amplitude and local frequency of an IMF. In practice we observe that for any interval $[a,b]$ that is comparable to the filter length, say the length of $[a,b]$ is 2 to 5 times the length of the filter length, then $\|\IMF|_{[a,b]}\|_2$ and $\|iA^{\IMF}|_{[a,b]}\|_2$ are comparable. This is the base for the following definition of {\emph local amplitude}.

\begin{definition}[Local amplitude and Local frequency of an IMF]\label{def:LA_LF}
Given an $\IMF\in \mathbb{R}^n$ and let $iA^{\IMF}\in \mathbb{R}^n$ and $iF^{\IMF}\in \mathbb{R}^n$ be the  corresponding instantaneous amplitude and frequency, respectively. The average of each of them over a segment of length $J$ is the local amplitude $LA^{\IMF}$ and local frequency $LF^{\IMF}$, respectively. That is
\[ \displaystyle{LA^{\IMF}= T(iA^{\IMF})}, \quad
\displaystyle{LF^{\IMF} = T(iF^{\IMF})}
\]
\end{definition}

As we have observed above, $\|T\|= 1$, thus, we have
\[
\|LA^{\IMF}\|_2\le  \|iA^{\IMF}\|_2.
\]

Leveraging on the previous results we can introduce the TFR method named \emph{IMFogram}, whose pseudocode for the discrete setting is detailed in Algorithm \ref{algo:IMFogram}.
\begin{algorithm}
\caption{\textbf{IMFogram} $A$ = IMFogram(IMFs)}\label{algo:IMFogram}
\begin{algorithmic}
\STATE $M$ number of IMFs
\STATE $N$ signal length
\STATE $R$ number of overlapping time windows $I_j=[a_j,\ b_j]$
\FOR{ $k=1$ \TO $M$ }
     \STATE Compute the local amplitudes $LA^{(k)}_{I_j}$
     \STATE Compute the local frequencies $LF^{(k)}_{I_j}$
      \FOR{ $j=1$ \TO $R$ }
           \STATE \begin{equation}\label{eq:IMFogram}
                       A(LF^{(k)}_{I_j},j) = A(LF^{(k)}_{I_j},j)+LA^{(k)}_{I_j}\phantom{111111111111111111111111111111111111111}
                  \end{equation}
      \ENDFOR
\ENDFOR
\STATE return $A$
\end{algorithmic}
\end{algorithm}

The proposed TFR method requires as input the IMF decomposition of a given signal. In this work, based on the theoretical results proved so far in the literature \cite{cicone2020numerical}, we opt to use the IMFs produced by the FIF method. However, we point out, that any other alternative decomposition techniques proposed in the literature which produces IMFs can be used. As output the IMFogram produces a matrix $A$ which contains the local amplitudes of the various IMFs distributed by rows and columns depending on the corresponding local frequency and time window, respectively.

Regarding the choice of the time window, the IMFogram differs from the spectrogram, or similar TFR methods, since it does not require a clever choice of the time window length. The sliding time window is basically required in the IMFogram to reduce the number of columns of the output matrix $A$. The user, in fact, can choose as length of the sliding time window a value that goes from one sample point to the length of the signal itself.

Another important result concerning the FIF method is the following theorem proved in \cite{cicone2022oneortwo}
\begin{theorem}\label{thm:DIF_resolution}
Given the signal $s\in\R^p$ defined as $s(x_k, a, f) = \cos(2\pi x_k)+a \cos(2\pi f x_k+\phi), k\in[1,\ldots\ p]$, sampled at $\textrm{Fs}=\frac{1}{T}\gg 1$ samples/sec such that $p=n\frac{1}{T}$, where $f \in (\frac{1}{n}, 1)$, assuming $w$ is a double convolution filter, whose filter length measures $2L+1$, such that the smallest positive zero in the DFT of $w$ corresponds to frequency $1$.

Then FIF algorithm without stopping criterion can always resolve $s$ into the two components $\cos(2\pi x_k)$ and $a \cos(2\pi f x_k+\phi)$, as far as $f \leq 1-\frac{1}{n}$.
\end{theorem}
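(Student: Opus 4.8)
The plan is to push the entire argument into the frequency domain, where the FIF inner loop acts diagonally. By the operator identity \eqref{eq:IMFoperatorFrequencyDom}, after $m$ iterations of the variation operator the candidate IMF has discrete Fourier transform
$$\widehat{\IMF}_m(\xi) = \left(1 - \widehat{w}(\xi)\right)^m\,\widehat{s}(\xi),$$
so the whole question reduces to understanding the pointwise behavior of the multiplier $(1-\widehat{w}(\xi))^m$ at the two frequencies carried by $s$, as the iteration count $m\to\infty$ (which is what ``without stopping criterion'' means). First I would locate the spectral content of the two components: because $s$ is sampled so that the fundamental frequency $1$ sits on a DFT bin, $\cos(2\pi x_k)$ contributes at the bin $\xi=1$, while $a\cos(2\pi f x_k+\phi)$ contributes at the bin $\xi=f$. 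The hypotheses $f\in(\frac{1}{n},1)$ and $f\le 1-\frac{1}{n}$ are exactly what guarantee that the $f$-bin is distinct from both the DC bin and the bin at frequency $1$: the lower bound keeps it off the trend, and the upper bound separates it from the fundamental by at least one unit of the frequency resolution $\frac{1}{n}$.

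Next I would exploit the two defining properties of the filter. Since $w$ is a double convolution filter, $\widehat{w}=(\widehat{\tilde{w}})^2$ takes values in $[0,1]$ with $\widehat{w}(0)=\int w = 1$. The assumption that the smallest positive zero of $\widehat{w}$ lies at frequency $1$ gives $\widehat{w}(1)=0$, and, because the only zeros of $\widehat{w}$ are the (doubled) zeros of $\widehat{\tilde{w}}$, it forces $\widehat{w}(\xi)>0$ for every $0<\xi<1$. Evaluating the multiplier at the two relevant bins then yields the dichotomy
$$\left(1 - \widehat{w}(1)\right)^m = 1 \quad\text{for every } m, \qquad 0 \le \left(1 - \widehat{w}(f)\right)^m < 1 \ \text{ with } \ \left(1 - \widehat{w}(f)\right)^m \xrightarrow[m\to\infty]{} 0,$$
the decay on the right holding precisely because $0<\widehat{w}(f)\le 1$ forces $0\le 1-\widehat{w}(f)<1$.

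Putting these together, as $m\to\infty$ the multiplier converges to the indicator of the frequency-$1$ bin, so $\widehat{\IMF}_m \to \widehat{s}\cdot\mathbf{1}_{\{\xi=1\}}$. Taking the inverse FFT, the first IMF converges to $\cos(2\pi x_k)$ and the residual $s-\IMF$ converges to $a\cos(2\pi f x_k+\phi)$, which is the claimed resolution; this is where the strict separation of bins is used, since it is what leaves the $f$-component entirely in the residual rather than bleeding into the extracted IMF.

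The step I expect to be the main obstacle is the discrete/continuous bookkeeping underlying the location of the spectral content. For a finite sample, a cosine whose frequency is not exactly on the DFT grid leaks into neighboring bins, so one must verify either that the sampling parameters $p=n/T$ place both $1$ and $f$ exactly on grid points, or that the leakage of the $f$-component into the frequency-$1$ bin is controlled and vanishes in the relevant sense. The separation condition $f\le 1-\frac{1}{n}$ is exactly what makes this bookkeeping go through, so turning the clean frequency-domain dichotomy above into the stated exact resolution — rather than an approximate one — is the crux that I would spend the most care on.
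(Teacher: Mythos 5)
The paper itself contains no proof of this theorem: it is imported verbatim from \cite{cicone2022oneortwo} (``One or two frequencies? The iterative filtering answers''), so there is nothing in-paper to compare against; your argument --- diagonalizing the inner loop via the DFT as in \eqref{eq:IMFoperatorFrequencyDom}, using $\widehat{w}\in[0,1]$, $\widehat{w}(1)=0$, and $\widehat{w}(\xi)>0$ for $0<\xi<1$ to obtain the multiplier dichotomy at the two bins, and sending $m\to\infty$ so that the iteration converges to the projection onto the kernel bins of $\widehat{w}$ --- is precisely the mechanism of the cited proof, and it is correct. The grid-alignment issue you flag as the crux is not an actual gap under the paper's standing conventions: signals are assumed pre-extended (periodized) for FFT purposes, and the hypotheses $f\in\left(\frac{1}{n},1\right)$, $f\le 1-\frac{1}{n}$ are to be read as placing the second tone exactly on a DFT bin strictly between DC and the bin of frequency $1$ (i.e.\ $fn$ an integer in $\{2,\dots,n-1\}$), so there is no spectral leakage and the limiting projection yields the exact, not approximate, resolution.
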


As an immediate corollary of this theorem we have that
 \begin{corollary}\label{cor:DIF_resolution}
Given a multicomponent signal $s = \sum_{j=1}^N a_j \cos(2\pi f_j x_k+\phi_j)$, $k\in[1,\ldots\ p]$, sampled at $\textrm{Fs}=\frac{1}{T}\gg 1$ samples/sec such that $p=n\frac{1}{T}$, where $f_j\ll \frac{1}{2T}$, $\forall\, j=1,\ \ldots,\ N$, $\frac{f_{j+1}}{f_j} \in (\frac{1}{n}, 1-\frac{1}{n}), \forall\, j=1,\ \ldots,\ N-1$, and $\frac{a_j}{a_h}=\textrm{O}(1), \forall j,h=1,\ \ldots,\ N$. Assuming that to extract each IMF component we are using a double convolution filter $w_j$, whose filter length measures $2L_j+1$, such that the smallest positive zero in the DFT of $w_j$ corresponds to the frequency $f_j$.

Then FIF algorithm without stopping criterion can always resolve $s$ into $N$ IMF components $a_j \cos(2\pi f_j x_k+\phi_j),\, j=1,\ \ldots,\ N$.
\end{corollary}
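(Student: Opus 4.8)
The plan is to run the outer loop of Algorithm \ref{algo:FIF} one component at a time and to exploit the fact, recorded in \eqref{eq:IMFoperatorFrequencyDom}, that the IMF operator acts \emph{diagonally} in the Fourier domain: it multiplies $\widehat{s}(\xi)$ by $(1-\widehat{w}(\xi))^{p}$ independently at each sampling frequency $\xi$. Since we work ``without stopping criterion,'' the number of inner iterations is unbounded, so each extracted IMF is the pointwise limit $p\to\infty$ of this multiplier applied to the current remainder. The whole statement then follows by induction on the component index, with Theorem \ref{thm:DIF_resolution} supplying the single extraction step. Throughout, $k$ denotes the sample index as in the statement, and I induct on the component index $j$.

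First I would record that the hypothesis $\tfrac{f_{j+1}}{f_j}\in(\tfrac1n,\,1-\tfrac1n)$ forces the frequencies to be strictly decreasing, $f_1>f_2>\cdots>f_N$, all lying well below the Nyquist frequency $\tfrac{1}{2T}$ by the assumption $f_j\ll\tfrac{1}{2T}$; hence $f_1$ is the highest frequency present. The induction hypothesis at stage $j$ is that the remainder handed to the outer loop equals exactly $\sum_{i=j}^{N} a_i\cos(2\pi f_i x_k+\phi_i)$, so that its discrete Fourier transform is supported only on the bins $\pm f_j,\dots,\pm f_N$. By assumption the filter $w_j$ used at this stage is a double convolution filter whose smallest positive zero of $\widehat{w}_j$ falls exactly at $f_j$, the highest remaining frequency.

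The single extraction step is then precisely Theorem \ref{thm:DIF_resolution} after rescaling $f_j$ to the normalized frequency $1$, applied now with the diagonal action in mind. At $\xi=f_j$ we have $\widehat{w}_j(f_j)=0$, so the multiplier $(1-\widehat{w}_j(f_j))^{p}\equiv 1$ and the $f_j$-component is preserved; at every lower bin $\xi=f_i$ with $i>j$ we have $0<f_i<f_j$, and because $f_j$ is the \emph{smallest} positive zero of $\widehat{w}_j$ and $\widehat{w}_j=(\widehat{\tilde w}_j)^2$ is the square of the Fourier transform of a compactly supported probability density, we get $0<\widehat{w}_j(f_i)<1$, whence $(1-\widehat{w}_j(f_i))^{p}\to 0$ as $p\to\infty$. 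Passing to the limit, the $j$-th IMF equals $a_j\cos(2\pi f_j x_k+\phi_j)$ and the new remainder is $\sum_{i=j+1}^{N} a_i\cos(2\pi f_i x_k+\phi_i)$, which closes the induction; after $N$ steps the remainder is exhausted. I emphasize that the only property of Theorem \ref{thm:DIF_resolution} actually used is this pointwise dichotomy of the multiplier, and since the operator is diagonal, replacing the single lower frequency of the theorem by \emph{several} lower frequencies changes nothing.

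The main obstacle I anticipate is the bookkeeping needed to guarantee that this dichotomy is valid at \emph{every} stage and that the outer loop terminates correctly. Two points require care. First, one must verify $\widehat{w}_j(f_i)<1$ strictly (not merely $\le 1$) at each lower bin: this uses $f_i\neq 0$ together with the fact that $\widehat{w}_j>0$ on the whole interval $(0,f_j)$ — true because $\widehat{w}_j$ is continuous, equals $1$ at the origin, and $f_j$ is its smallest positive zero — so that $1-\widehat{w}_j(f_i)\in(0,1)$ and the power genuinely decays. Here the ratio lower bound $\tfrac{f_{j+1}}{f_j}>\tfrac1n$ keeps each $f_i$ above the $\tfrac1n$ resolution floor, ensuring even the lowest component $f_N$ is seen as a bona fide oscillation (at least two extrema) rather than absorbed into the trend $r$, so the loop does not stop prematurely. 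The ratio upper bound $\tfrac{f_{j+1}}{f_j}<1-\tfrac1n$ is exactly the resolution condition $f\le 1-\tfrac1n$ of Theorem \ref{thm:DIF_resolution} transported to each stage, placing the next frequency strictly inside the positivity region $(0,f_j)$ of $\widehat{w}_j$. Finally, I would remark that the amplitude condition $\tfrac{a_j}{a_h}=\mathrm{O}(1)$ does not enter the limit argument itself — the Fourier-domain multiplier ignores amplitudes — but is what makes the \emph{assumed} filter choices, whose zeros sit at the top remaining frequency at each stage, consistent with the extrema-based length estimate \eqref{eq:Unif_Mask_length} that the algorithm actually employs.
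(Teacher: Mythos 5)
Your proposal is correct and follows essentially the same route as the paper: the paper's own (one-sentence) proof invokes Theorem \ref{thm:DIF_resolution} together with the observation that FIF extracts IMFs by subsequent subtraction from the highest frequency component to the lowest, which is exactly your induction on the component index. Your write-up simply fills in the details the paper leaves implicit — the diagonal Fourier-multiplier dichotomy $(1-\widehat{w}_j(f_j))^{p}\equiv 1$ versus $(1-\widehat{w}_j(f_i))^{p}\to 0$ for $i>j$ — so it is a more explicit rendering of the same argument, not a different one.
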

The proof follows directly from the previous theorem and the observation that FIF, as any EMD--like methods, produce all the IMFs by subsequent subtraction from the signal, starting from the highest frequency component to the lowest one.

Based on these results, we are now ready to prove the following
\begin{theorem}
    Given a discrete nonstationary signal $s\in\R^p$, sampled at $\textrm{Fs}=\frac{1}{T}\gg 1$ samples/sec such that $p=n\frac{1}{T}$, and which we assume to be stationary on $K$ non-overlapping time windows $I_i$, $i=1,\, \ldots,\, K$, of length $L=\frac{p}{K}$, assuming that in each time interval $I_i$ the signal is given by $s(I_i) = \sum_{j=1}^{N_i} a_j^{(i)} \cos(2\pi f_j^{(i)} x_k+\phi_j^{(i)})$, $x_k\in I_i$, $\frac{f_{j+1}^{(i)}}{f_j^{(i)}}\in\left(\frac{1}{n},1-\frac{1}{n}\right)$, for every $j=1,\ \ldots,\ N_i-1$, and $\frac{a_j^{(i)}}{a_h^{(i)}}=\textrm{O}(1),\, \forall\, j,h=1,\ \ldots,\ N_i$, if we let the FIF stopping criterion $\delta$ to go to zero, then the Hadamard power two of the IMFogram matrix $A$ converges to the spectrogram matrix obtained as the entry-wise squared absolute value of the DFT of the matrix $S\in\R^{L\,\times\, K}$, which contains in each column the non-overlapping time windows $I_i$, $i=1,\, \ldots,\, K$, of length $L$ of the signal.
\end{theorem}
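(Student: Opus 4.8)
The plan is to reduce the global, piecewise-stationary statement to a per-window, per-component computation, exploiting the fact that as $\delta\to 0$ the FIF decomposition becomes an exact separation of pure tones. First I would fix a time window $I_i$ and invoke the (approximate) locality of FIF: since the filter length is computed from the local extrema count via \eqref{eq:Unif_Mask_length} and the filters have compact support, the restriction of the decomposition to $I_i$ depends, up to boundary terms that become negligible when $L$ is large relative to the filter length, only on $s(I_i)=\sum_{j=1}^{N_i} a_j^{(i)}\cos(2\pi f_j^{(i)} x_k+\phi_j^{(i)})$. On this window the hypotheses of Corollary \ref{cor:DIF_resolution} are exactly those assumed (the frequency ratios lie in $(\frac{1}{n},1-\frac{1}{n})$ and the amplitudes are of the same order), so letting $\delta\to 0$ — equivalently, running the inner loop without a stopping criterion — the IMFs restricted to $I_i$ converge to the individual components $a_j^{(i)}\cos(2\pi f_j^{(i)} x_k+\phi_j^{(i)})$.

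Second, I would compute the local amplitude and local frequency of each resolved component via Definitions \ref{def:IA_IF} and \ref{def:LA_LF}. For a pure cosine $a\cos(2\pi f x+\phi)$ the envelope of its modulus is the constant $a$ and its zero crossings are equispaced, so Definition \ref{def:IA_IF} returns the constants $iA\equiv a$ and $iF\equiv f$; applying the averaging operator $T$ then gives $LA = T(iA)=a$ and $LF=T(iF)=f$. Consequently, in the limit the $j$-th component of window $I_i$ contributes the value $a_j^{(i)}$ to the IMFogram in the single frequency bin $f_j^{(i)}$ and the single time column $i$. The separation hypothesis guarantees that distinct components fall in distinct frequency bins, so the accumulation in \eqref{eq:IMFogram} produces no collisions, $A(f_j^{(i)},i)\to a_j^{(i)}$, and all remaining entries tend to $0$.

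Third, I would compute the target spectrogram and match it to the Hadamard square. The matrix $S$ has $i$-th column equal to the windowed signal $s(I_i)$, whose DFT splits each summand $a_j^{(i)}\cos(2\pi f_j^{(i)} x_k+\phi_j^{(i)})$ into two contributions at $\pm f_j^{(i)}$, each of modulus proportional to $a_j^{(i)}$; taking the entry-wise squared absolute value gives an entry proportional to $(a_j^{(i)})^2$ at the bin $f_j^{(i)}$ of column $i$ and zero elsewhere. Comparing with $A^{\circ 2}(f_j^{(i)},i)\to (a_j^{(i)})^2$ from the previous step yields the claimed identity, the squaring being precisely what converts the amplitude stored by the IMFogram into the energy recorded by the spectrogram. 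I would then reconcile the two proportionality constants by fixing the conventions (the normalized $2$-norm, the averaging operator $T$ with $\|T\|=1$, and the factor $\tfrac12$ from the $\pm f$ splitting of each cosine) so that the window length $L$ cancels and the constants coincide.

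The hard part will be the first step: Corollary \ref{cor:DIF_resolution} is stated for a globally stationary multicomponent signal, whereas here $s$ is only piecewise stationary, so I must quantify the error incurred by treating FIF on the whole of $\R^p$ as though it acted independently on each window $I_i$. This amounts to bounding the influence of neighbouring windows through the filter's tails and showing that, once $L$ is large relative to the filter length \eqref{eq:Unif_Mask_length}, these boundary contributions vanish in the $\delta\to 0$ limit. A secondary but genuine difficulty is ensuring that each $f_j^{(i)}$ lies on — or converges to — the DFT grid of the window, so that the spectrogram is exactly concentrated; otherwise the spectral leakage would have to be controlled and absorbed into the limit as well.
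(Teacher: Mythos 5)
Your proposal is correct and takes essentially the same route as the paper's own proof: fix a window $I_i$, apply Corollary \ref{cor:DIF_resolution} in the limit $\delta\to 0$ to resolve $s(I_i)$ into its pure cosine components, observe that Definitions \ref{def:IA_IF} and \ref{def:LA_LF} then give $LA = a_j^{(i)}$ and $LF = f_j^{(i)}$, and identify the resulting entries of $A$ with the windowed DFT coefficients so that the Hadamard square reproduces the spectrogram. The difficulties you flag as remaining work (FIF acting globally rather than independently on each window, the on-grid placement of the $f_j^{(i)}$, and the normalization constant arising from the $\pm f_j^{(i)}$ splitting of each cosine) are precisely the points the paper's proof passes over without comment, so your plan is, if anything, more candid than the published argument.
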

\begin{proof}
We start considering a single time windows $I_i$. From Corollary \ref{cor:DIF_resolution} follows that FIF, when $\delta\rightarrow 0$, ends up separating the signal $s(I_i) = \sum_{j=1}^{N_i} a_j^{(i)} \cos(2\pi f_j^{(i)} x_k+\phi_j^{(i)})$ into the $N_i$ IMFs $a_j^{(i)} \cos(2\pi f_j^{(i)} x_k+\phi_j^{(i)})$, $j=1,\, \ldots,\, N_i$. Furthermore, based on Definition \ref{def:LA_LF}, the local frequency will be $LF_{I_i}^{(j)}=f_j^{(i)}$ and the local amplitude will be $LA_{I_i}^{(j)}=a_j^{(i)}$. Therefore, from Algorithm \ref{algo:IMFogram} it follows that the matrix $A$ will contain in the $i$-th column and at rows corresponding to frequencies $f_j^{(i)}$ the values $a_j^{(i)}$, $j=1,\, \ldots,\, N_i$. These amplitudes match the Fourier coefficients computed via Discrete Fourier Transform in the interval $I_i$ at frequencies $f_j^{(i)}$'s. The same result applies for all intervals $I_i$, $i=1,\, \ldots,\, K$. Hence, the entry-wise power two of the matrix $A$, i.e. $A$ matrix Hadamard power two, converges the spectrogram of the signals $s$ computed on $K$ non-overlapping time windows of length $L$ as the FIF stopping criterion $\delta$ goes to 0.
\end{proof}

We point out that, if we consider a time window in the IMFogram which matches the length of the signal, the proposed TFR method boils down to a periodogram of the signal.

It is important to highlight here that the TFR produced by IMFogram and spectrogram/periodogram are different in nature when the signal under study is not stationary inside time windows. In this more general scenario, their difference is due to the fact that the IMFogram uses information coming from the time domain, local periods and local amplitudes, to produce the TFR. Whereas, the spectrogram and periodogram, depending if we window or not the signal, use information coming from the frequency domain to produce the TFR. This explains why IMFogram can be more local in its TFR. The IMFogram does not need to extract frequency information from the Fourier transform of signal  windows. Furthermore, this windowing is the main cause of artifacts in the spectrogram. In fact, if on the one hand, the Heisenberg uncertainty principle cannot be eliminated, on the other hand the IMFogram allows to remove the uncertainties coming from the necessity, in the spectrogram and equivalent methods, of windowing the signal on sufficiently long time windows to reduce the local boundary effects induced by the Fourier Transform.

Another important observation regards how, in acoustics, a vibrato (frequency modulation with stationary amplitude) of a musical instrument can be perfectly reproduced by an expert musician as a amplitude modulation of a stationary frequency signal, what it is called a amplitude modulated monochromatic signal. With this knowledge, we observe that, while the TFR methods based on Fourier Transforms, like the well known periodogram and spectrogram, make use of stationary amplitude components to represent a nonstationary signal, the IMFogram, based on FIF--like decompositions, tends to represent a quasi--stationary frequency signal with modulated amplitude.
From this prospective, the IMFogram is a new and complementary way of looking to a signal with respect to standard TFR methods.

\section{Numerical Examples}\label{sec:NumericalExamples}

In this section we present few synthetic and real life examples of application of IMFogram algorithm as well as its comparisons with other classical methods. In particular, we compare IMFogram with spectrogram (STFT), continuous wavelet transform (CWT), synchrosqueezing transform (SST), and Hilbert Huang trasnform (HHT).

The following tests have been conducted using MATLAB$^{\tiny{\textregistered}}$ R2021a installed on a 64--bit Windows 10 Pro computer equipped with an Intel$^{\tiny{\textcopyright}}$ Core$^{\tiny{\textregistered}}$ i7-8550U at 1.80GHz CPU and 16GB RAM. All tested examples and algorithms are freely available online\footnote{\url{www.cicone.com}}.

\subsection{Synthetic example -- Undamped Duffing equation}

As a first example, we consider the synthetic signal generated as solution of the undamped Duffing equation
\begin{equation}\label{eq:duffing}
    \ddot{x}+\alpha x + \beta x^3 = \gamma \cos(\omega t),
\end{equation}
where $\alpha$ represents the linear stiffness, $\beta$ the nonlinearity in the restoring force, $\gamma$ the amplitude, $\omega$ the angular frequency of the forcing term. We can also rewrite \eqref{eq:duffing} to read
\begin{equation}\label{eq:duffing2}
    \ddot{x}+\left(\alpha + \beta x^2\right)x = \gamma \cos(\omega t),
\end{equation}
where $(\alpha + \beta x^2)$ is the nonlinear stiffness coefficient $k(x)$.

In Figure \ref{fig:Duffing} we plot the $\dot{x}$ solution of \eqref{eq:duffing2} when $ \alpha=-1$, $\beta = 1$, $\gamma = 0.1$, and $\omega= 1$.
The signal under investigation apparently contains two superimposed oscillatory components. In Figure \ref{fig:Duffing_TFR} we report the STFT, CWT and SST representation of $\dot{x}$. From all these TFR plots, it appears that three nonstationary frequency components are contained in the signal.
However, if we decompose, instead, the signal into IMFs, we discover that there are actually only two components contained in this signal, Figure \ref{fig:Duffing_TFR} second column. If we apply the HHT and the IMFogram, we obtain the TFRs plotted in the second column of Figure \ref{fig:Duffing_TFR}. Using these two plots it is possible to solve the riddle. The high frequency component is modulated in frequency at a high pace. In fact, the frequency of that component varies from period to period. It is evident from this example that it is not possible for methods based on wavelet or sinusoidal bases to capture properly such behavior. By comparing the HHT, right column second row in Figure \ref{fig:Duffing_TFR}, with the IMFogram, right column bottom row in Figure \ref{fig:Duffing_TFR}, we can see that both methods are able to produce a good TFR of the signal. However, it is important to mention that the HHT requires in this particular example an ad hoc tuning of the IMFs amplitudes to produce the result plotted in Figure \ref{fig:Duffing_TFR}. Otherwise, the first highest frequency component instantaneous frequency curve would not be visible in the plot. Whereas, the IMFogram did not require any special tuning to produce the result shown in Figure \ref{fig:Duffing_TFR}.

\begin{figure}
\centering
\includegraphics[width=0.8\linewidth]{./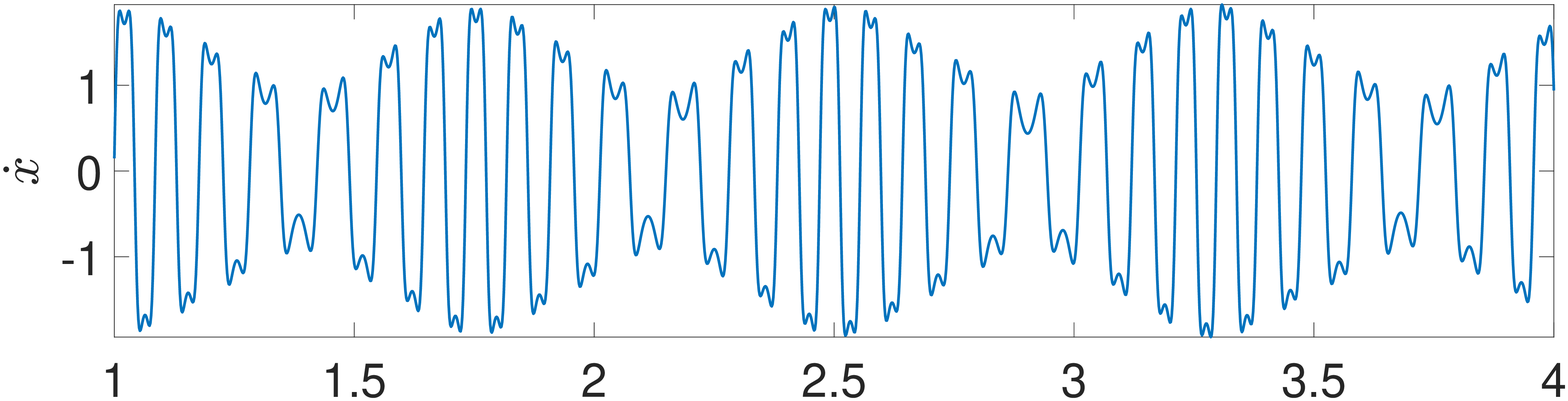}
\caption{Duffing equation example: the $\dot{x}$ solution of \eqref{eq:duffing2}.}
\label{fig:Duffing}
\end{figure}

\begin{figure}
\centering
\includegraphics[width=0.4\linewidth]{./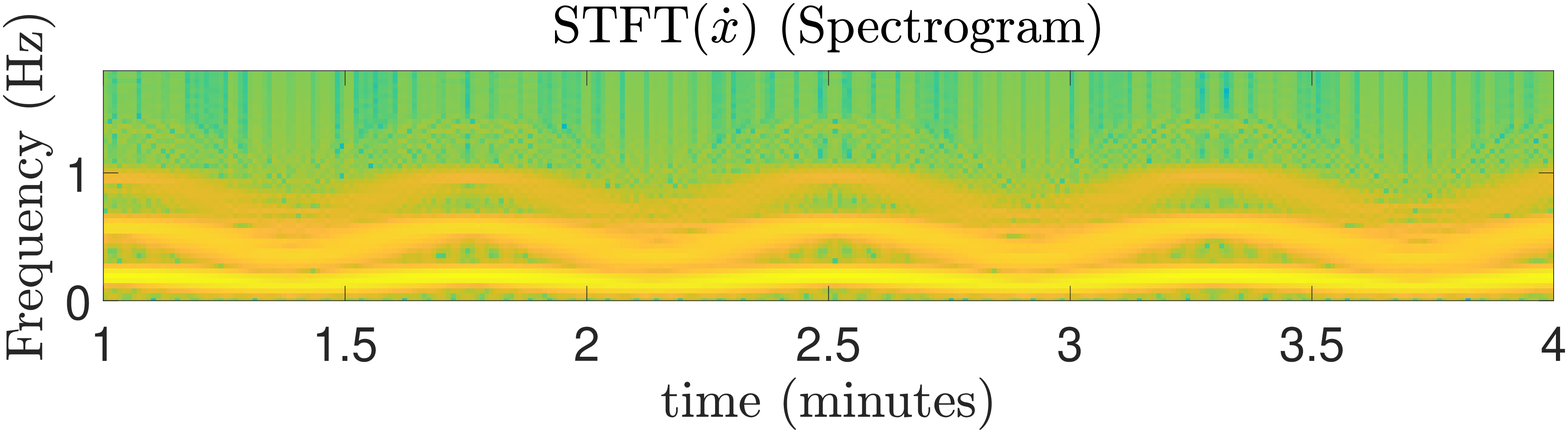}~$\phantom{11111111}$~\includegraphics[width=0.4\linewidth]{./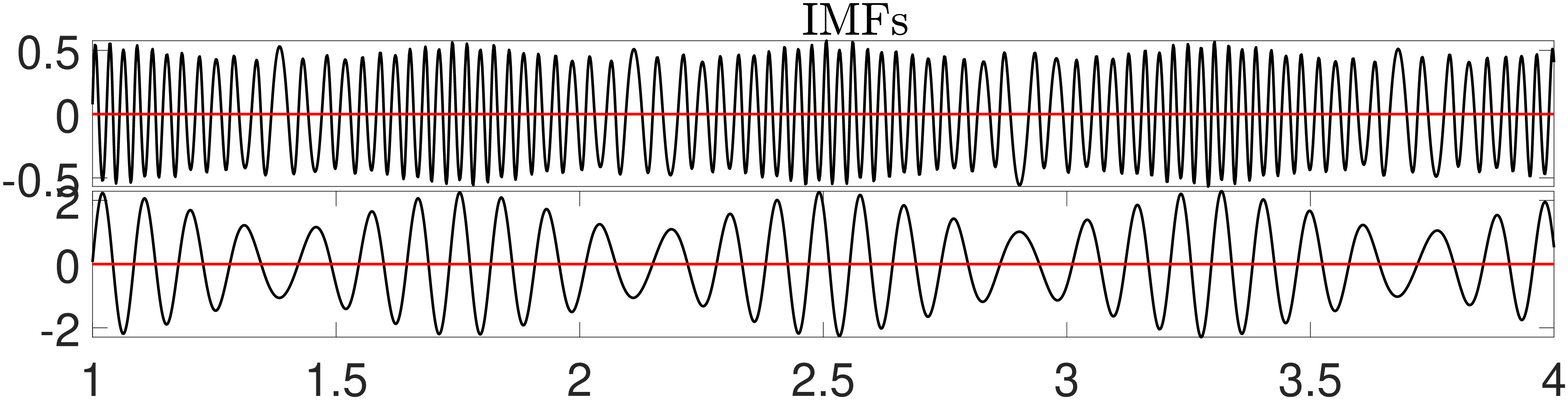}\\
\includegraphics[width=0.48\linewidth]{./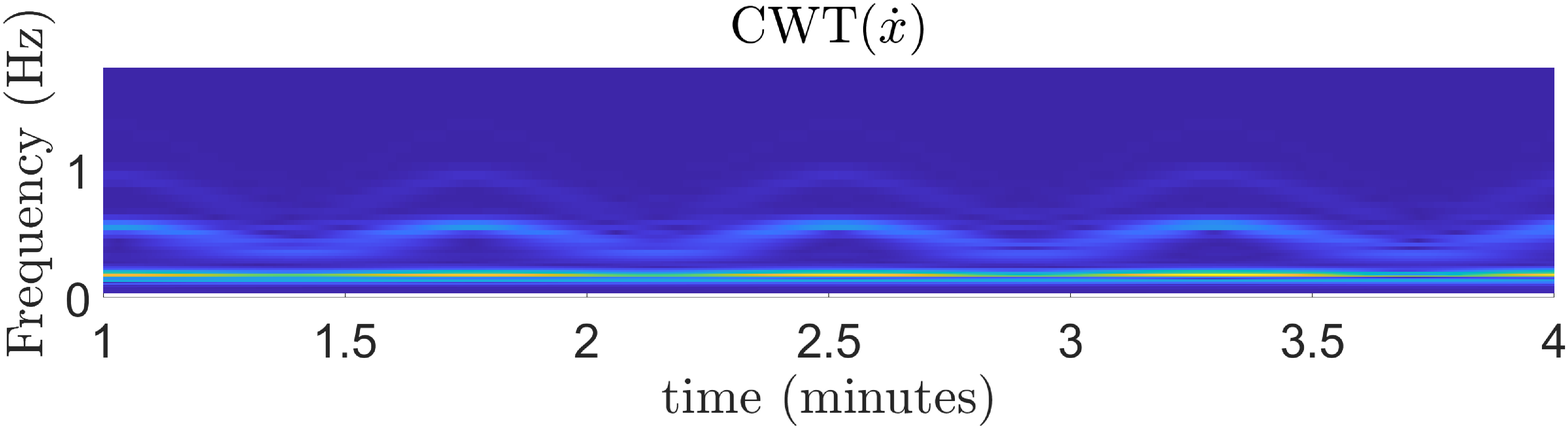}~\includegraphics[width=0.48\linewidth]{./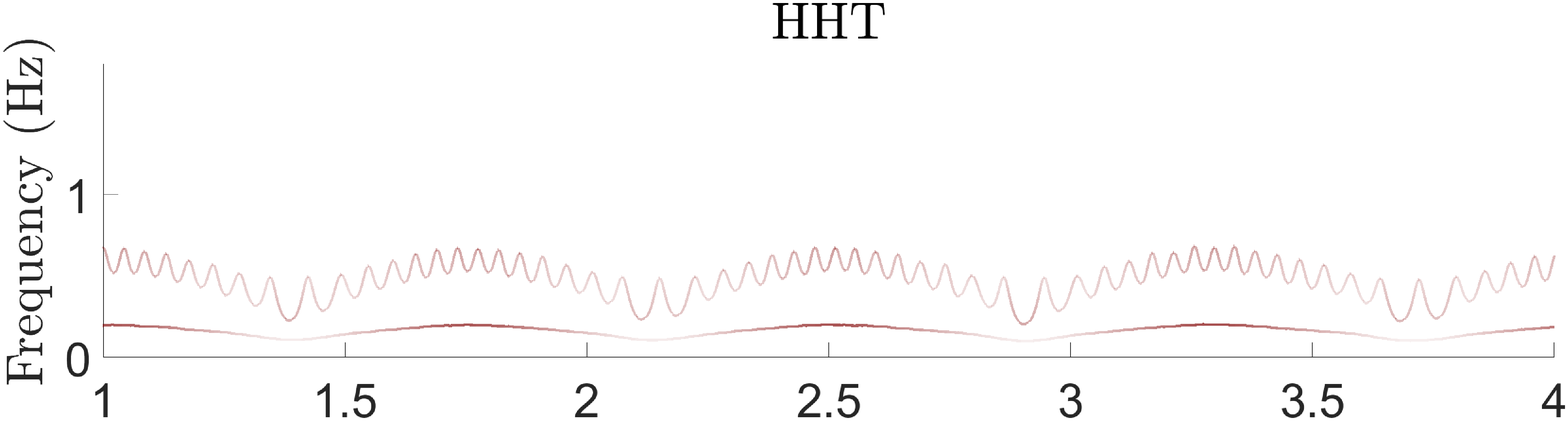}\\
\includegraphics[width=0.48\linewidth]{./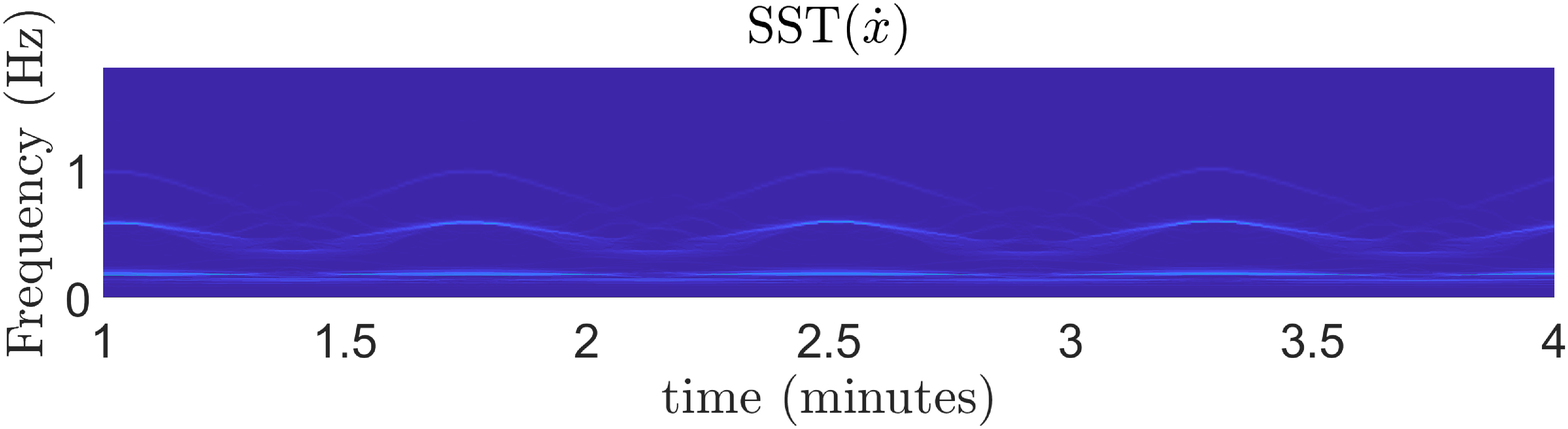}~\includegraphics[width=0.48\linewidth]{./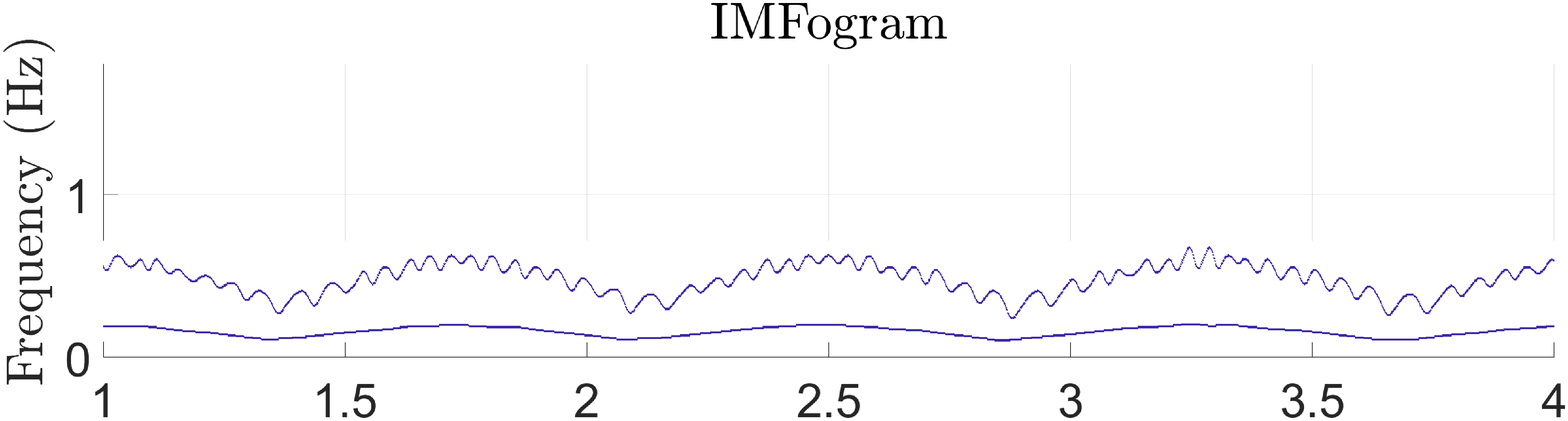}
\caption{Duffing equation example: In the first column, we plot the STFT, CWT, and SST representation of the $\dot{x}$ solution of \eqref{eq:duffing2}, respectively first, second and third row of the first column. Second column, we show the the Fast Iterative Filtering decomposition into IMFs of $\dot{x}$, the HHT and the IMFogram of these IMFs, respectively first, second and third row of the second column.}
\label{fig:Duffing_TFR}
\end{figure}

Furthermore, we underline that it is not sufficient to decompose the signal into IMFs to obtain a higher accuracy in its TFR. In Figure \ref{fig:DuffingIMFs} we show that, even if we first split the signal into two IMFs and then we apply the SST to them, still the TFR obtained contains the same artificial harmonics seen when we studied the whole signal, Figure \ref{fig:Duffing_TFR}.

\begin{figure}
\centering
\includegraphics[width=0.49\linewidth]{./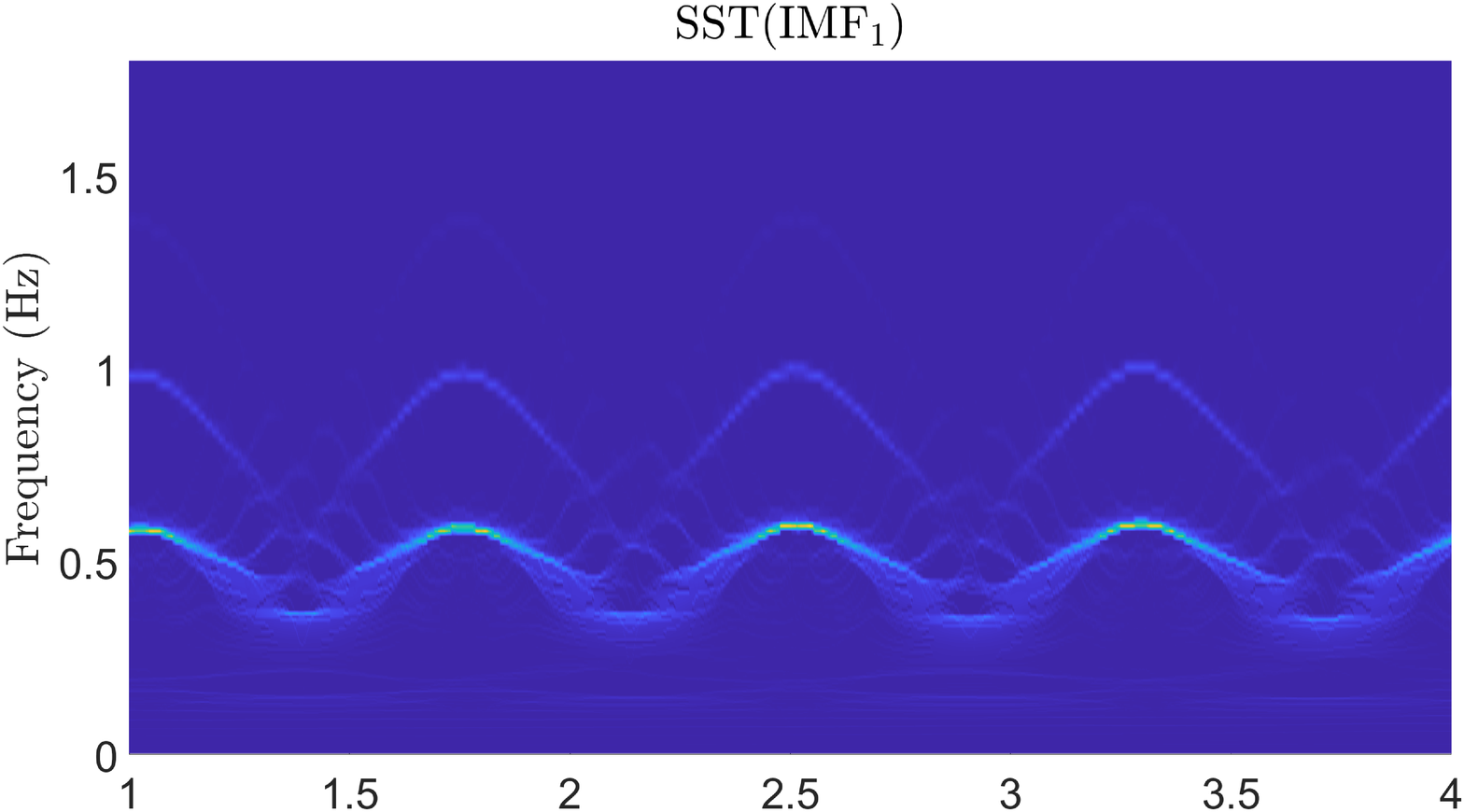}~\includegraphics[width=0.49\linewidth]{./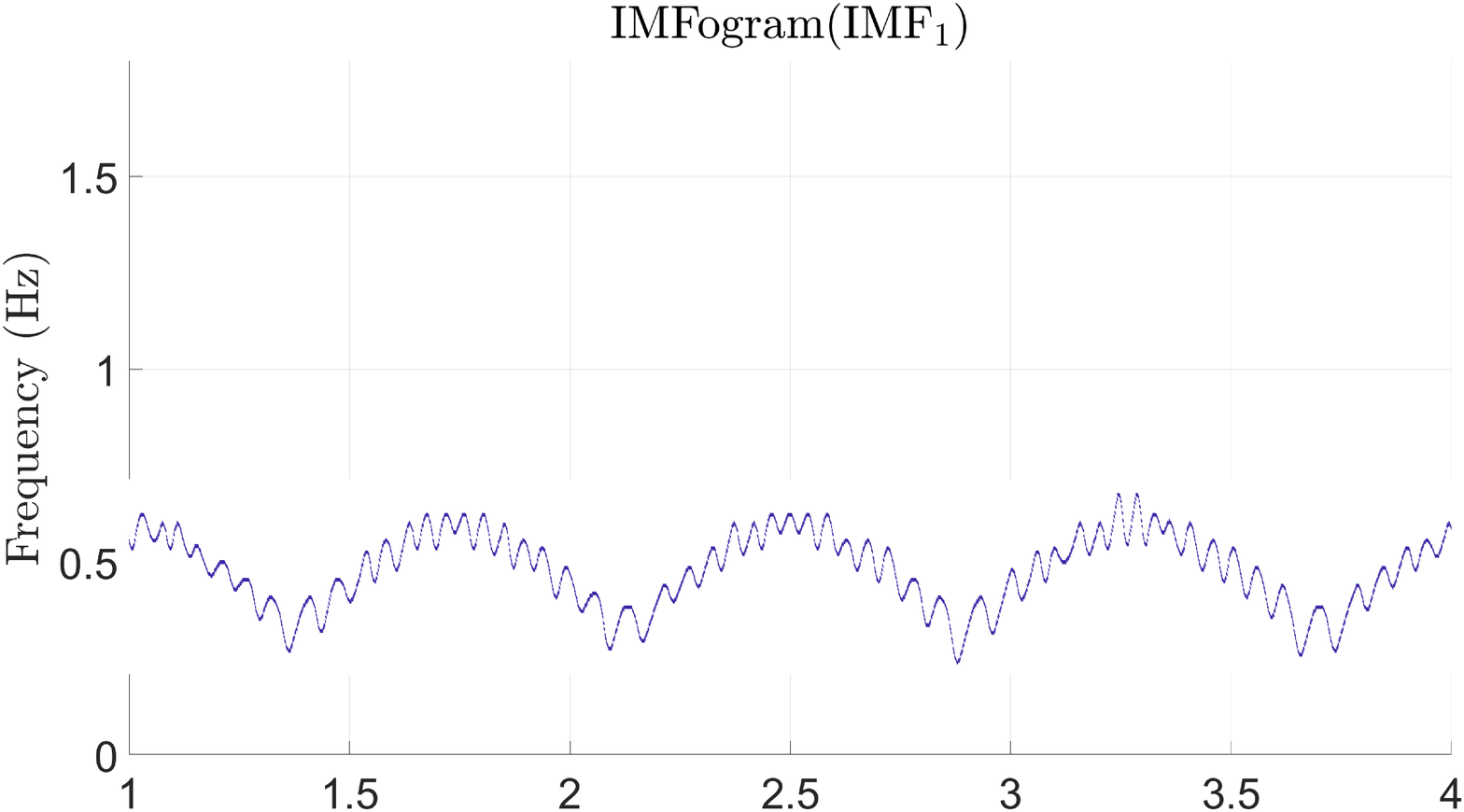}\\
\includegraphics[width=0.49\linewidth]{./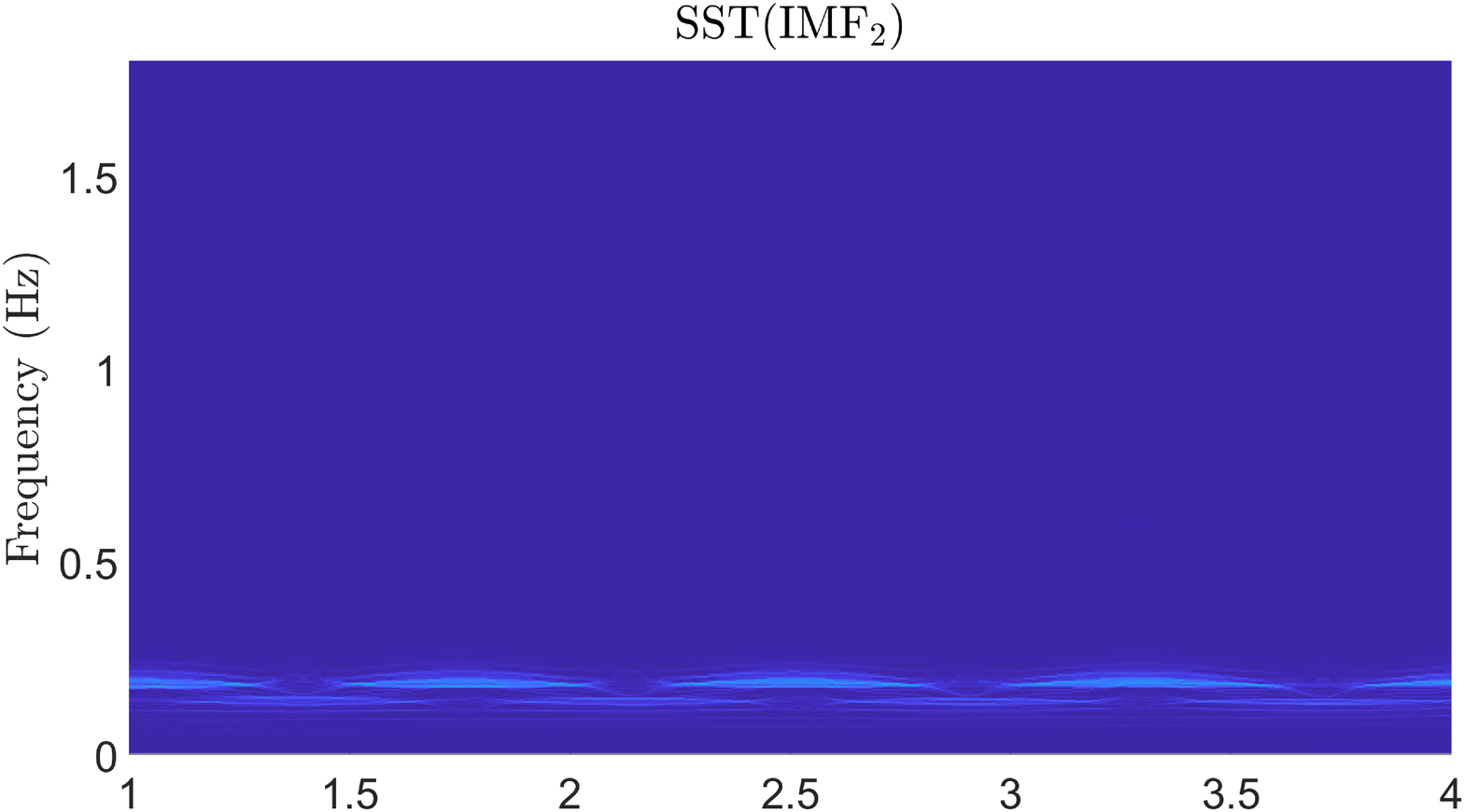}~\includegraphics[width=0.49\linewidth]{./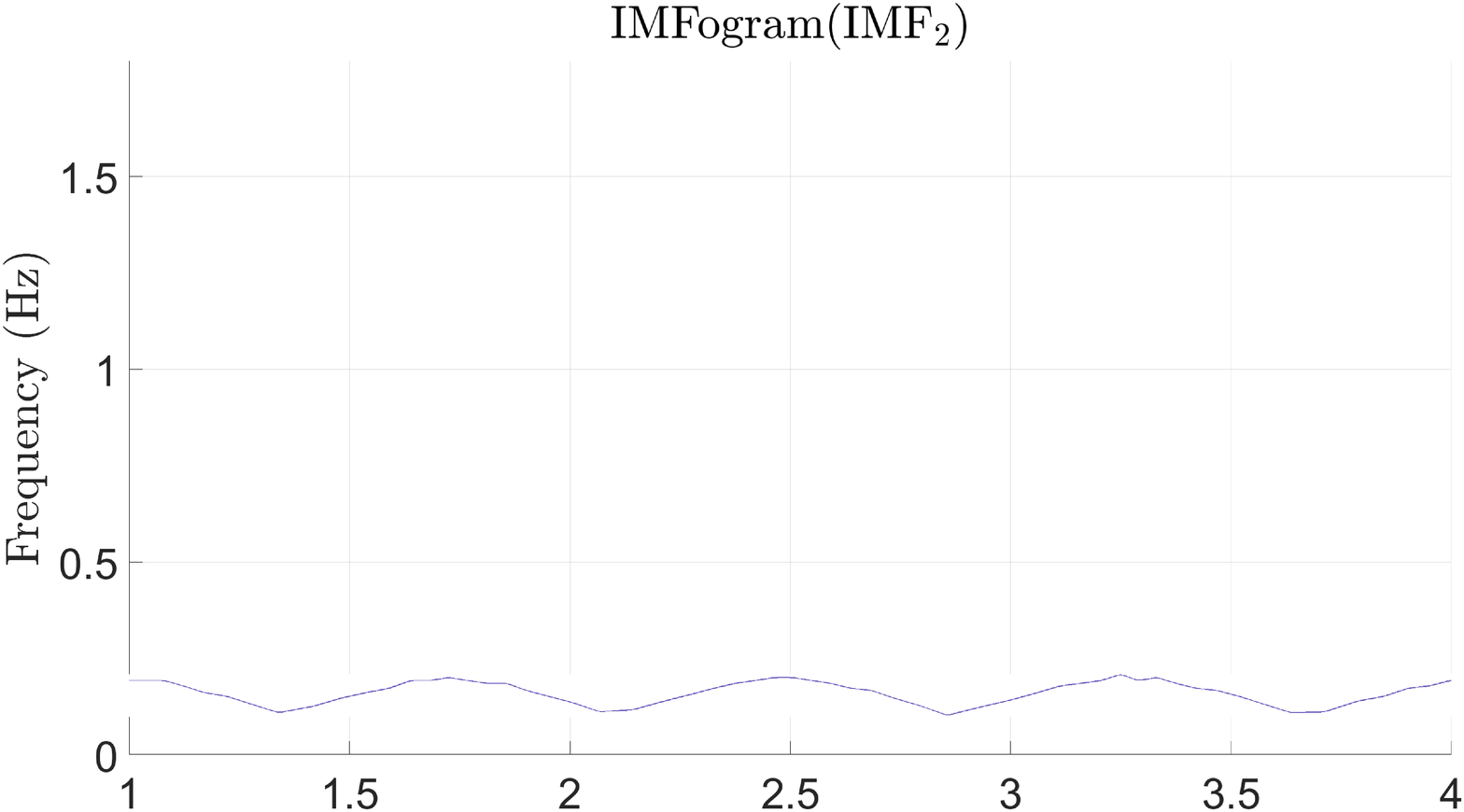}
\caption{Duffing equation example: Time Frequency representations of each IMF using SST (left column) and IMFogram (right column). First row $\IMF_1$, second row $\IMF_2$. }
\label{fig:DuffingIMFs}
\end{figure}

\subsection{Real life example -- violin vibrato}

In this real life example, we consider the recording of a violin when a G open string is played with vibrato\footnote{\url{https://en.wikipedia.org/wiki/File:Violin_vibrato_on_open_string_notes_and_on_fingered_notes.ogg}}. The signal is plotted in Figure \ref{fig:vibrato}, together with its STFT, CWT and SST representations. Furthermore, we report few IMFs from the signal decomposition produced using FIF algorithm. In the bottom row of this figure we report both HHT and IMFogram TFRs. The HHT is clearly having problems to properly capture the instantaneous frequencies contained in this signal. The IMFogram, instead, proves able to provide a crisp and clear representation of the time-frequency behavior of this signal which is, at a first view, comparable with the quality of the signal SST representation. However, at a more accurate analysis, we can observe how IMFogram contains many fine details that are completely missing in the SST representation. For instance, the oscillatory component around 800 Hz appears almost stationary in the SST representation, whereas in the IMFogram plot it has an instantaneous frequency which is clearly oscillating.

\begin{figure}
\centering
\includegraphics[width=0.40\linewidth]{./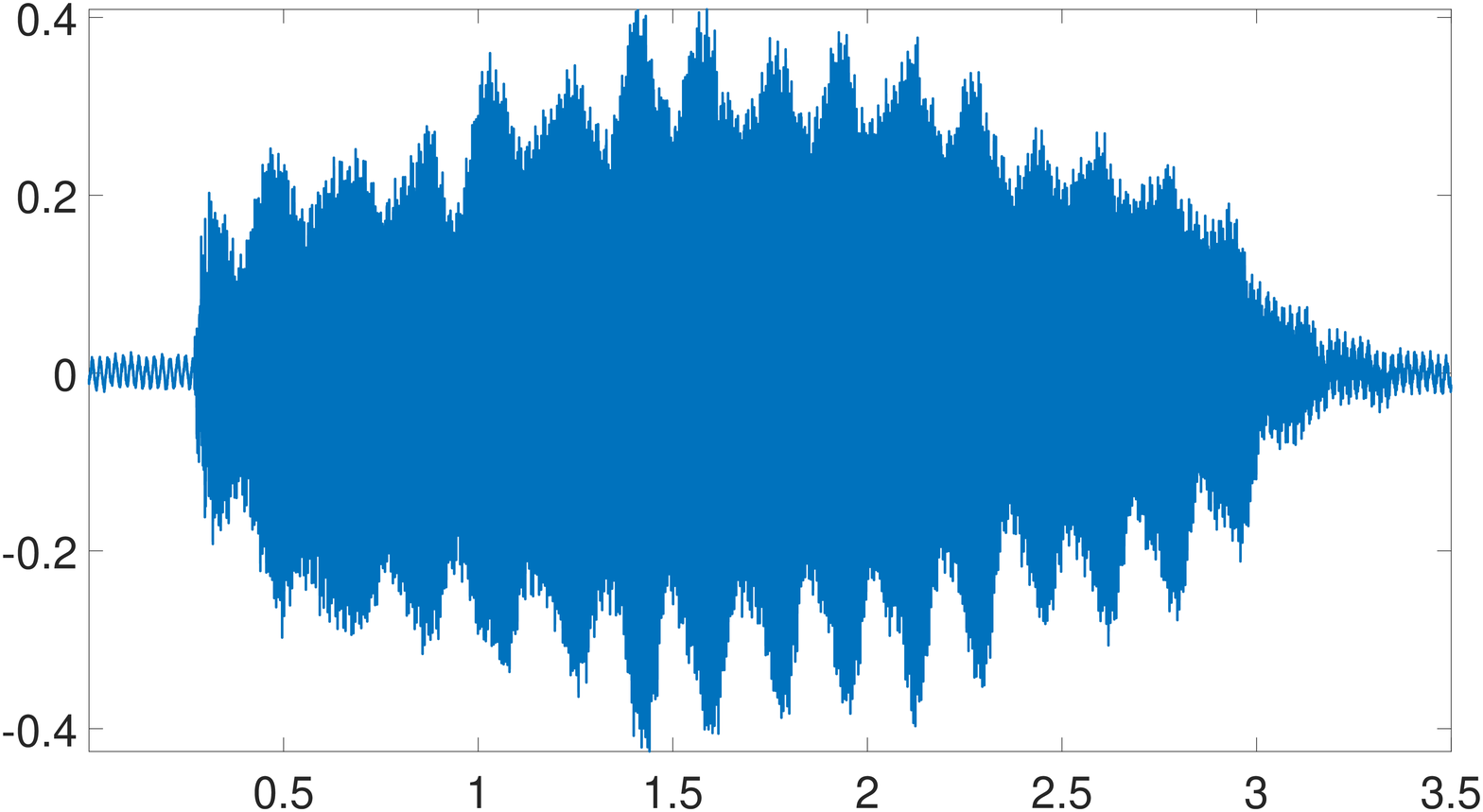}~$\phantom{1111111}$~\includegraphics[width=0.40\linewidth]{./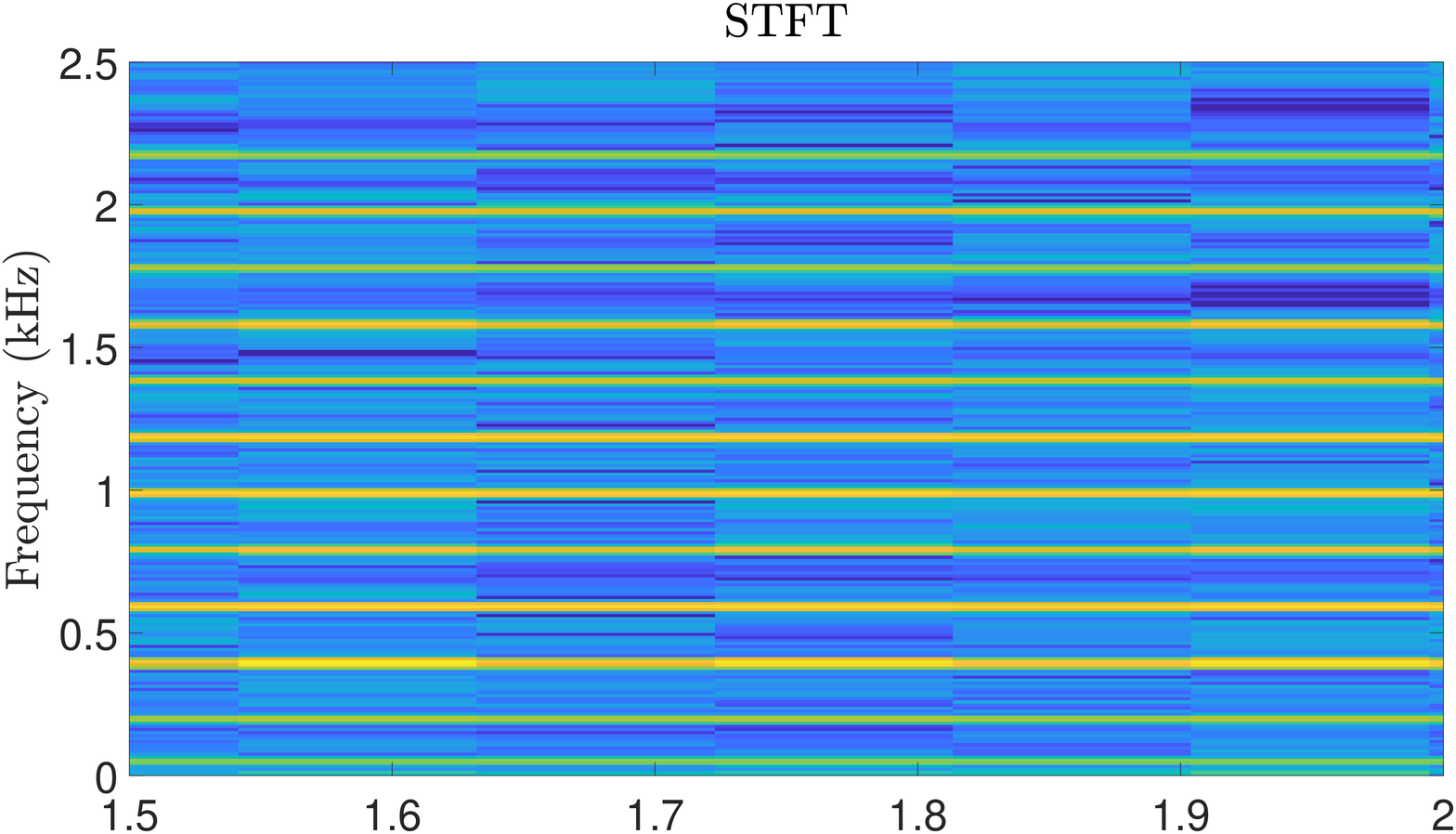}\\
\includegraphics[width=0.48\linewidth]{./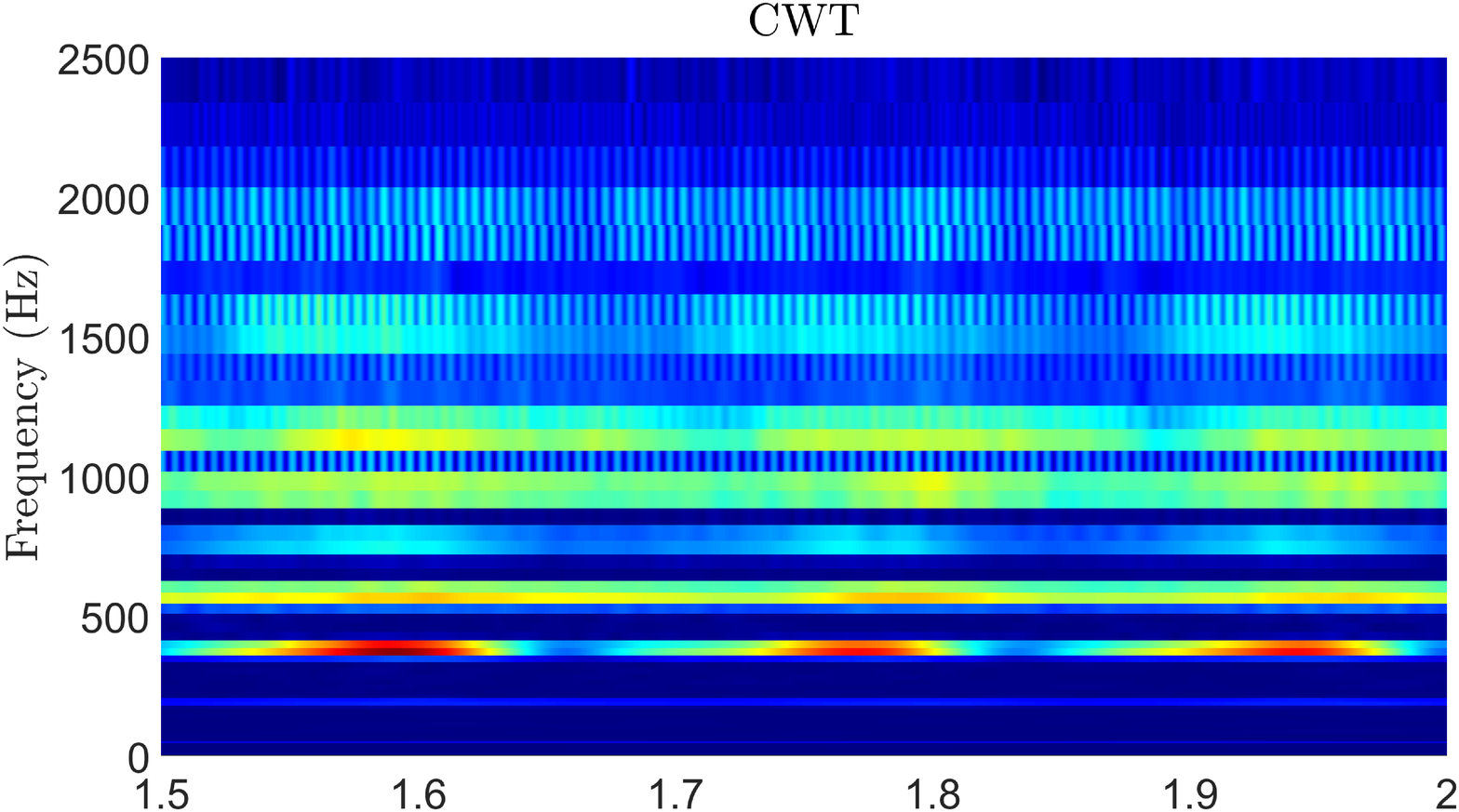}~\includegraphics[width=0.48\linewidth]{./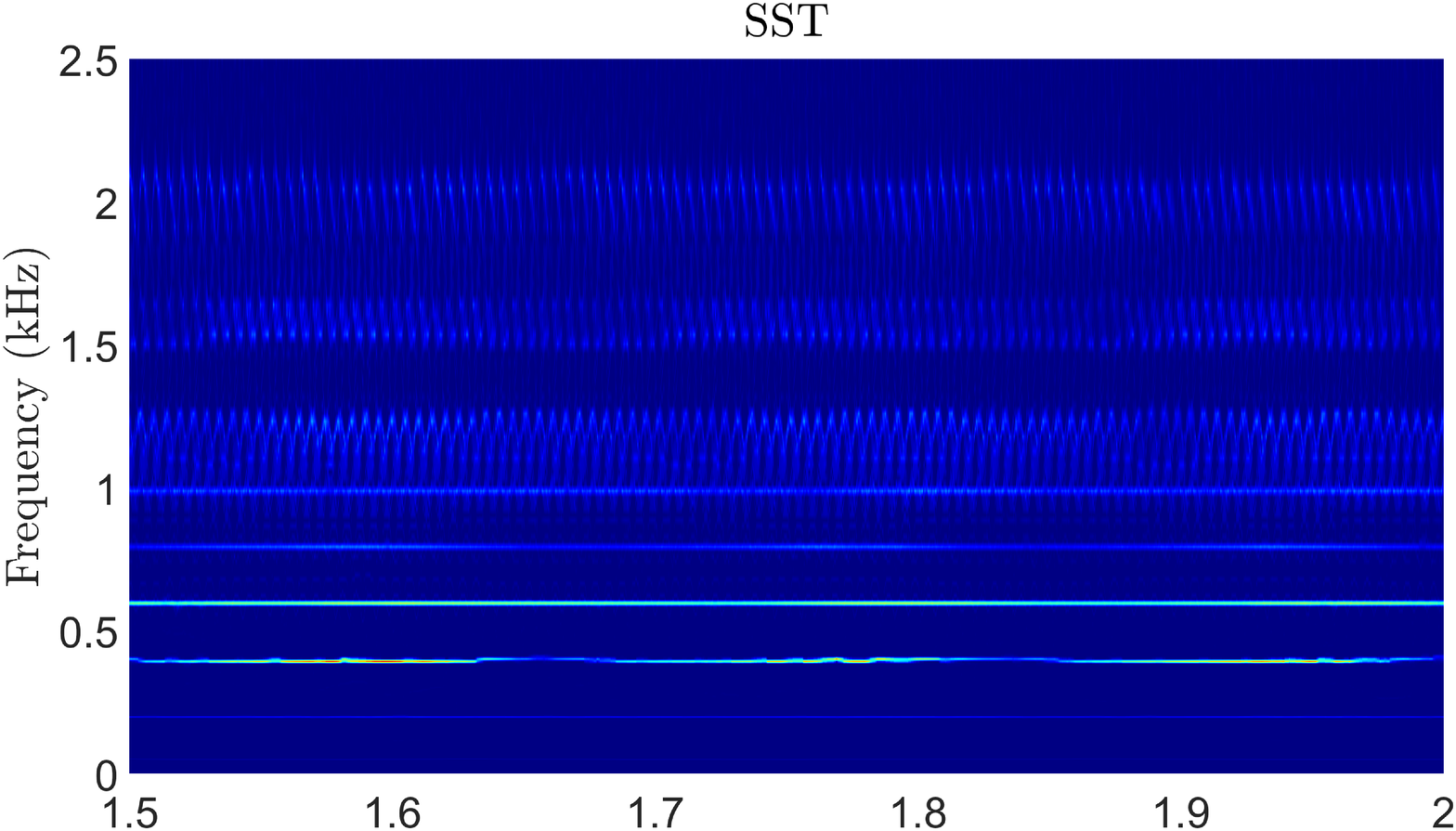}\\
\includegraphics[width=0.48\linewidth]{./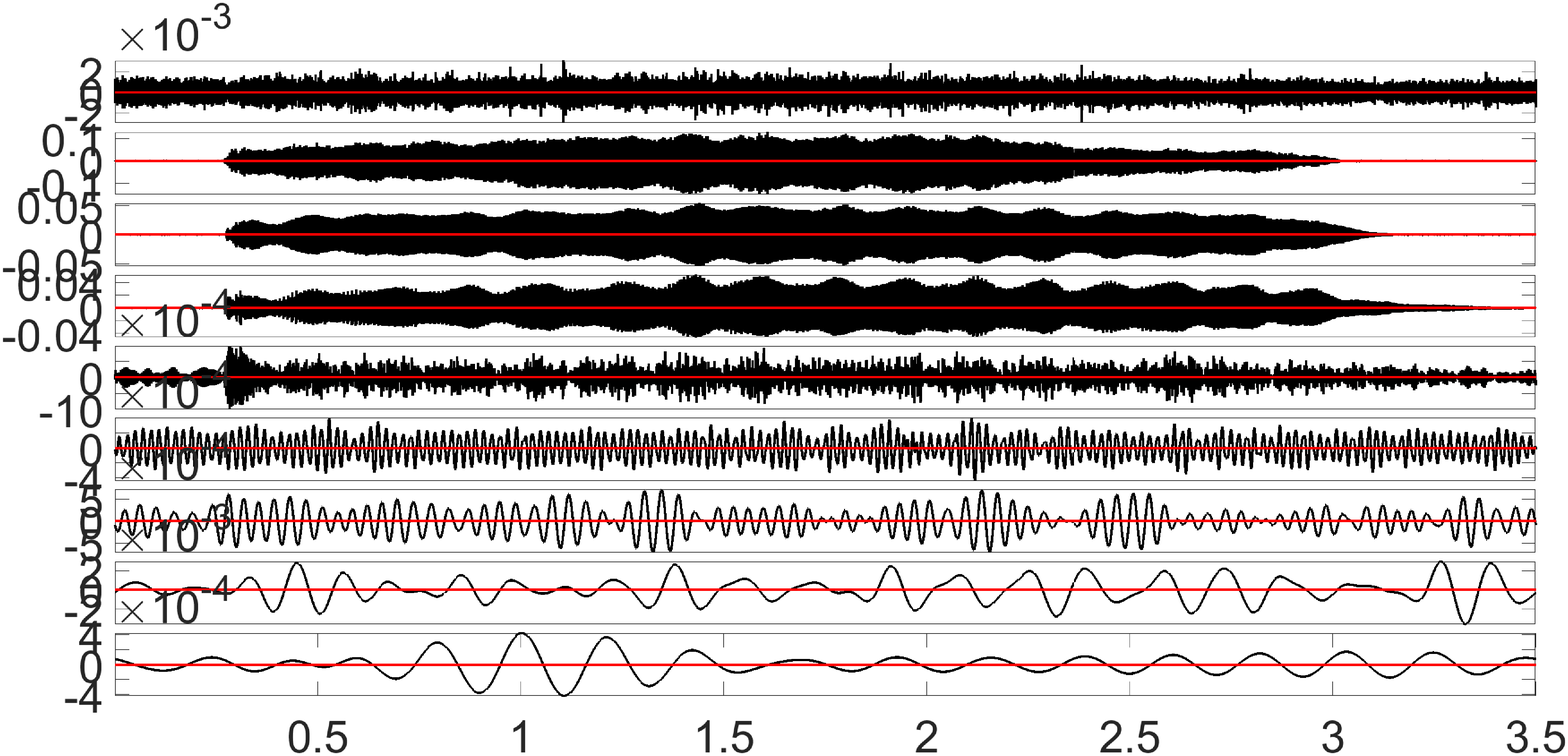}~\includegraphics[width=0.48\linewidth]{./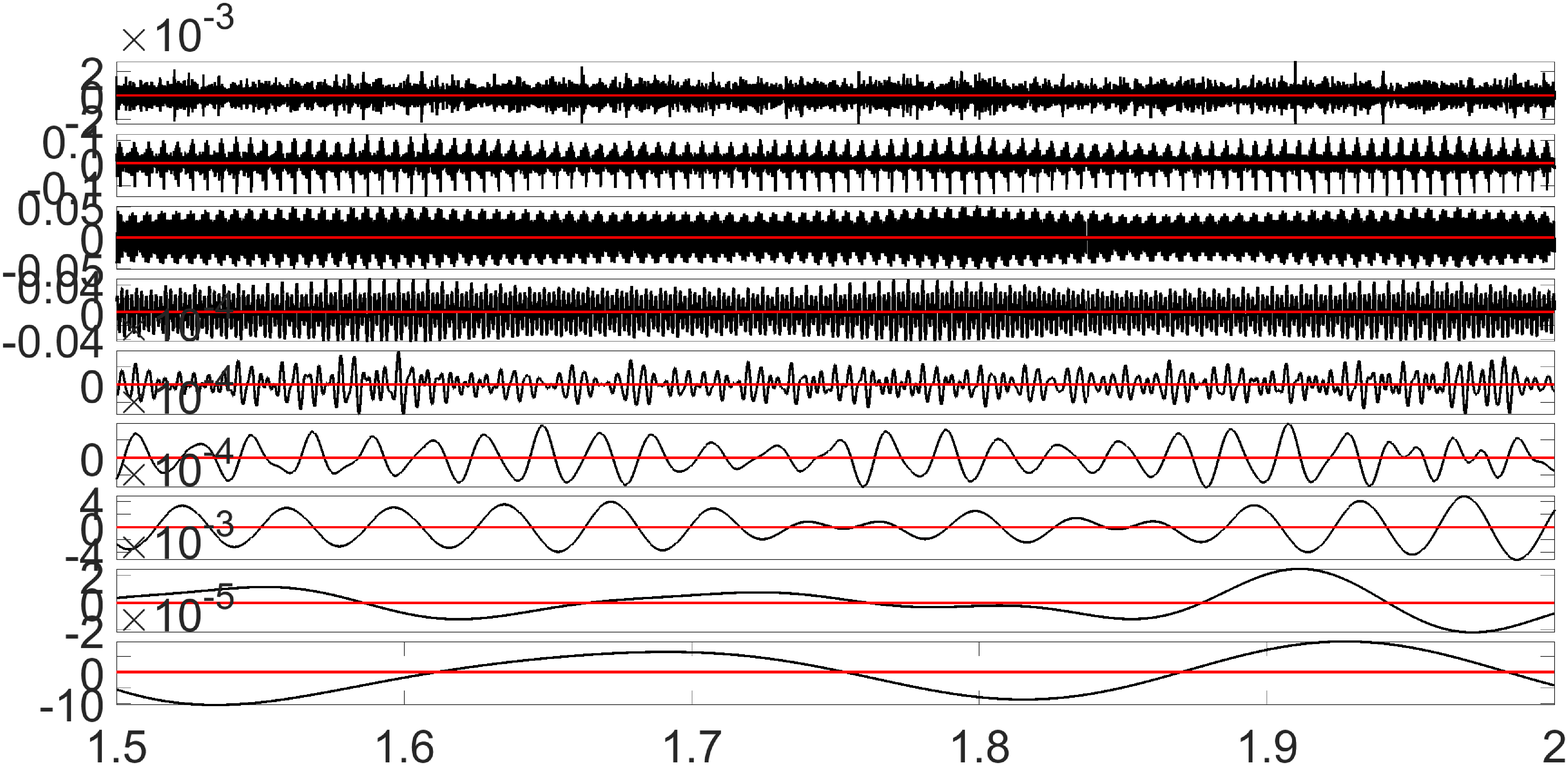}\\
\includegraphics[width=0.48\linewidth]{./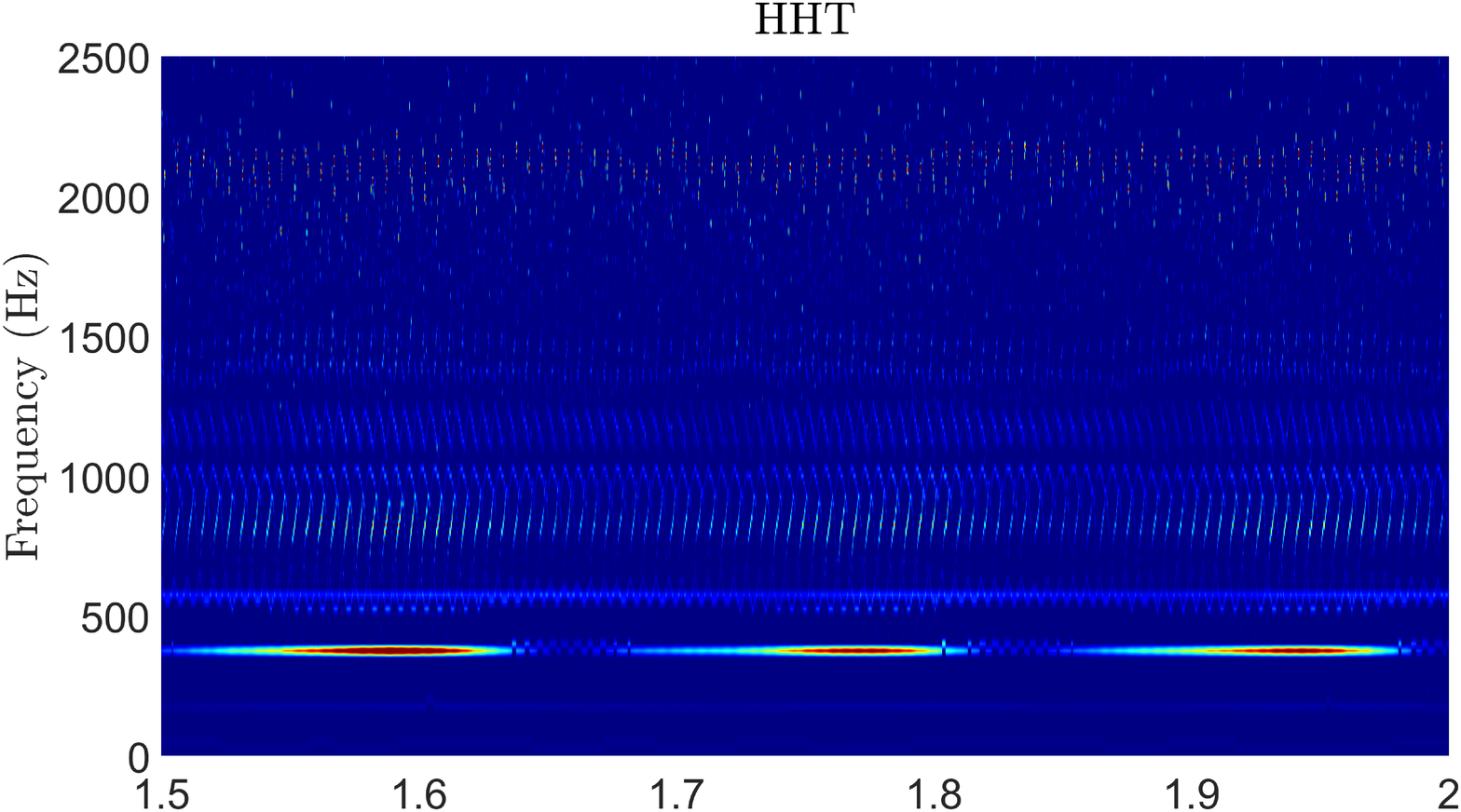}~\includegraphics[width=0.48\linewidth]{./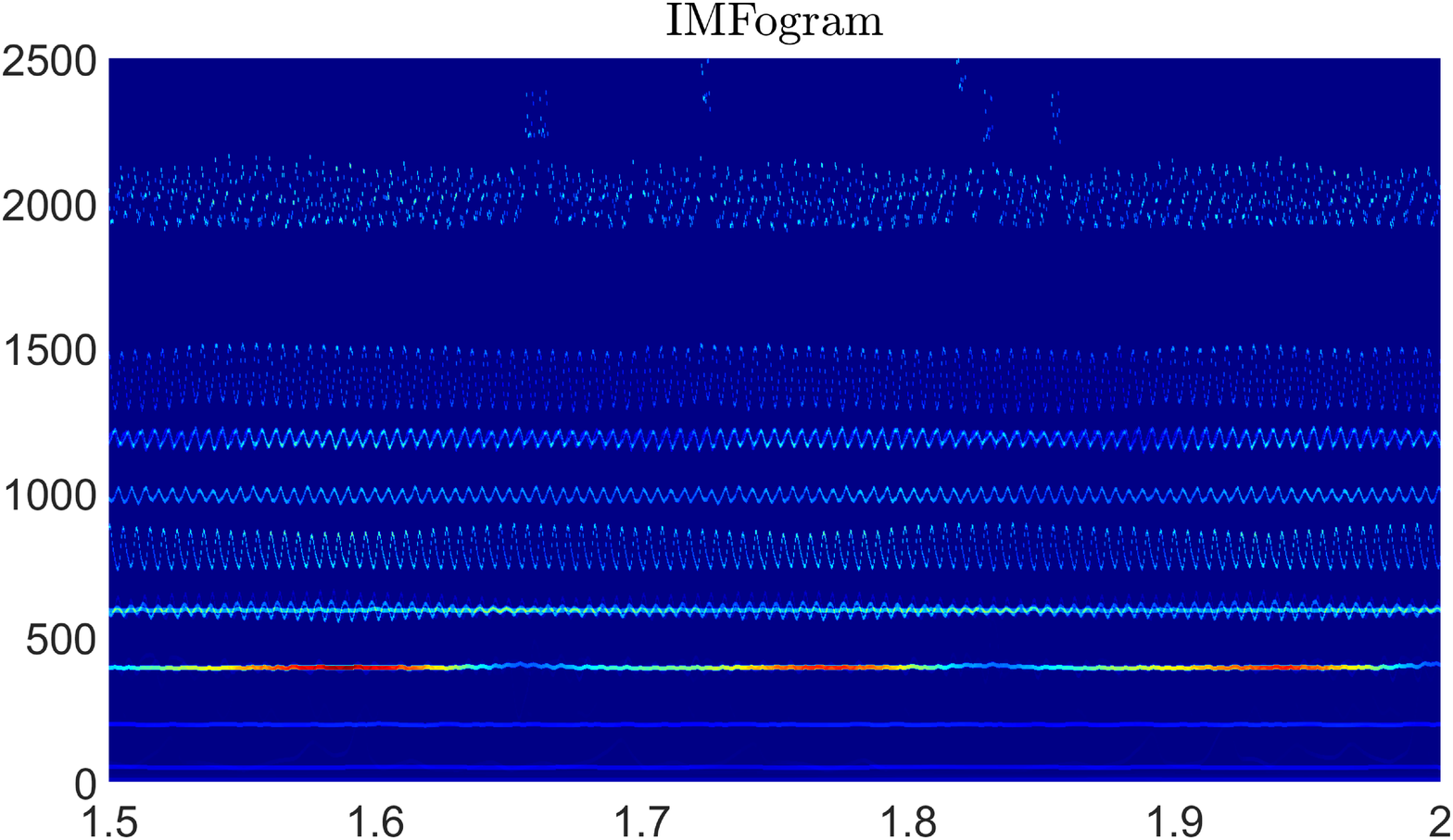}\\
\caption{Violin vibrato example: in the first row we plot the signal (left) and its STFT (right). In the second row we show the CWT (left) and SST representation (right). In the third row the signal IMF decomposition produced by FIF algorithm, left, and its zoomed in version in the time interval $[1.5,\, 2]$ seconds, right. Bottom row, HHT of the IMFs (left), and the IMFogram (right).}
\label{fig:vibrato}
\end{figure}

\subsection{Real life example -- Earth's ionospheric electron density}

In this last example, we consider the electron density (Ne), expressed in $\textrm{cm}^{-3}$, as measured in the topside ionosphere by the Langmuir Probes onboard one of the three spacecrafts constituting the Swarm\footnote{The Swarm dataset is the ``2 Hz Langmuir Probe Extended Dataset'' and it is provided by the European Space Agency (ESA) at \url{swarm-diss.eo.esa.int}  and at \url{http://vires.services.}} constellation of the European Space Agency \cite{friis2008swarm}. As an example, we consider the Ne samples measured at a 2 Hz rate by the Swarm Alpha satellite between 15:39 UT and 15:58 UT on 8 September 2017, covering the auroral and polar ionospheric sectors (magnetic latitudes below $55^{\circ}$ S) of the southern hemisphere, Figure \ref{fig:SWARMM-VIP} first row. The selected date is characterized by severe geomagnetic storm conditions triggered by Coronal Mass Ejections \cite{wu201904} that struck the Earth’s ionosphere, and caused the formation of ionospheric irregularities with largely varying spatial scales in the high-latitude ionosphere \cite{linty2018effects}. Swarm Alpha satellite flies in a quasi--polar orbit and, in the selected period, its speed was about 7.5 km/s and its altitude ranges between 440 km and 470 km.

\begin{figure}
\centering
\includegraphics[width=0.8\linewidth]{./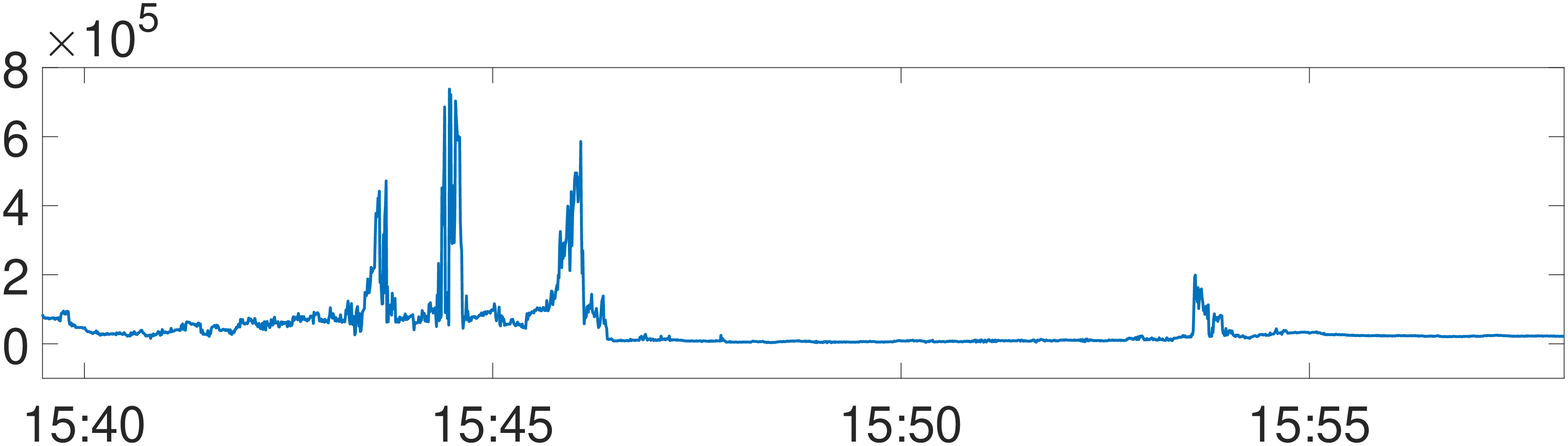}\\
\includegraphics[width=0.84\linewidth]{./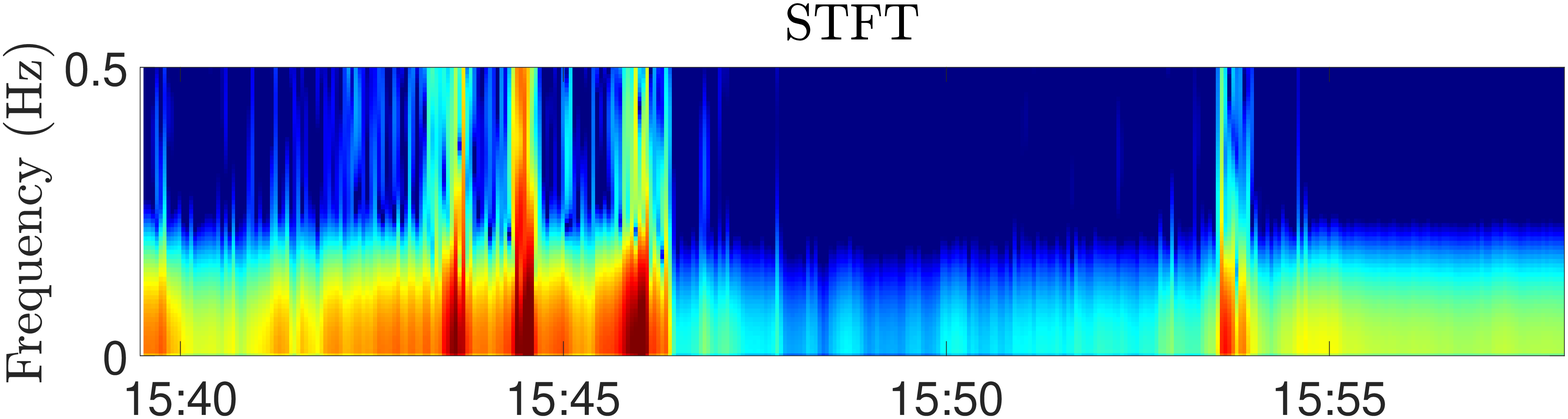}\\
\includegraphics[width=\linewidth]{./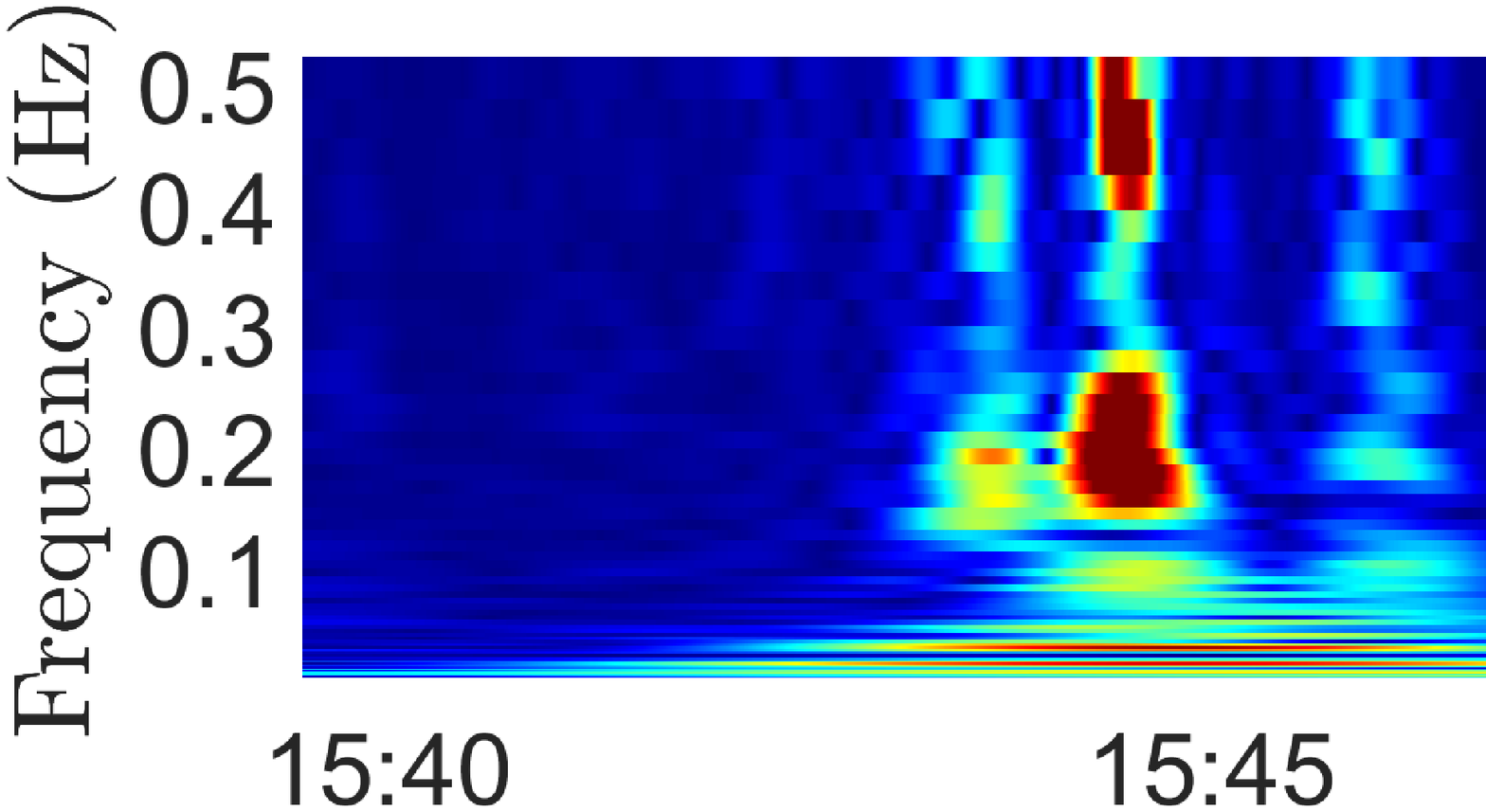}
\caption{Ionospheric electron density example: in the first row we plot the electron density (Ne) signal sampled by the ESA Swarm Alpha satellite between 15:39 UT and 15:58 UT on 8 September 2017. In the second and third row we report its STFT and CWT, respectively.}
\label{fig:SWARMM-VIP}
\end{figure}

From Figure \ref{fig:SWARMM-VIP} we can see that the STFT cannot achieve a high time-frequency localization accuracy, whereas the CWT can allow for a more accurate TFR.

\begin{figure}
\centering
\includegraphics[width=\linewidth]{./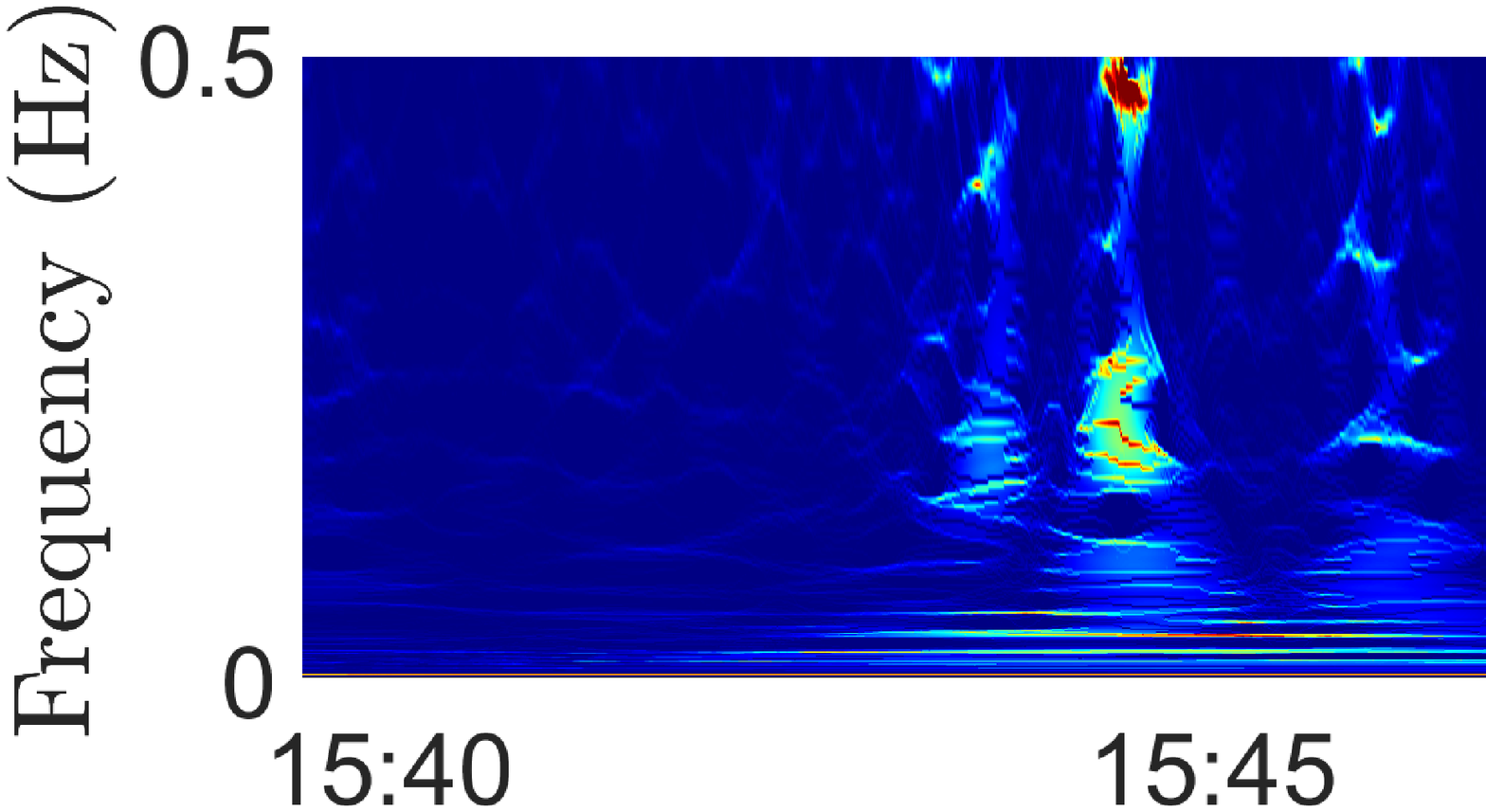}\\
\includegraphics[width=\linewidth]{./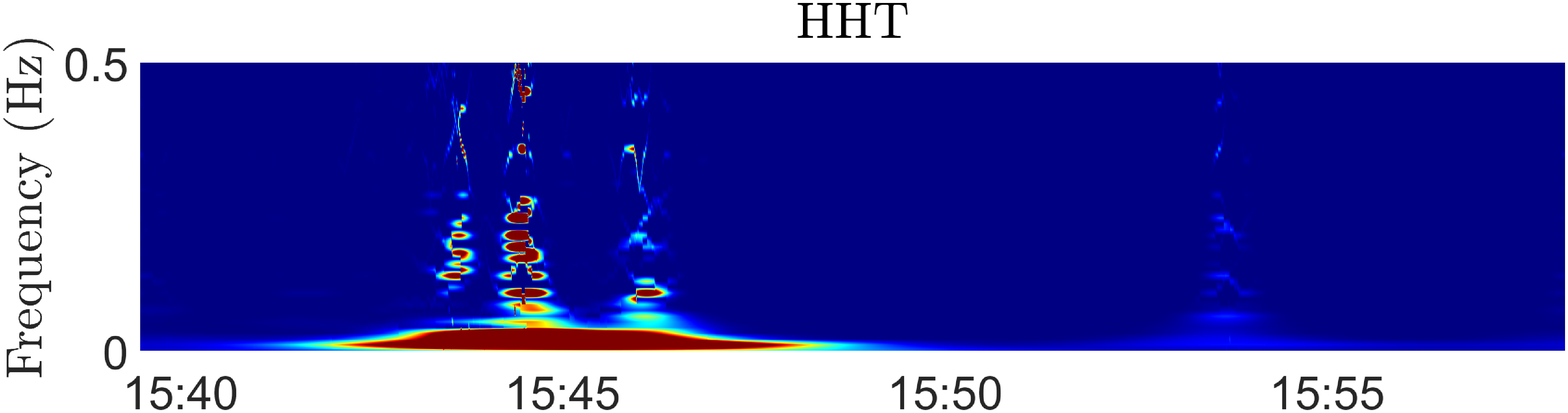}\\
\includegraphics[width=\linewidth]{./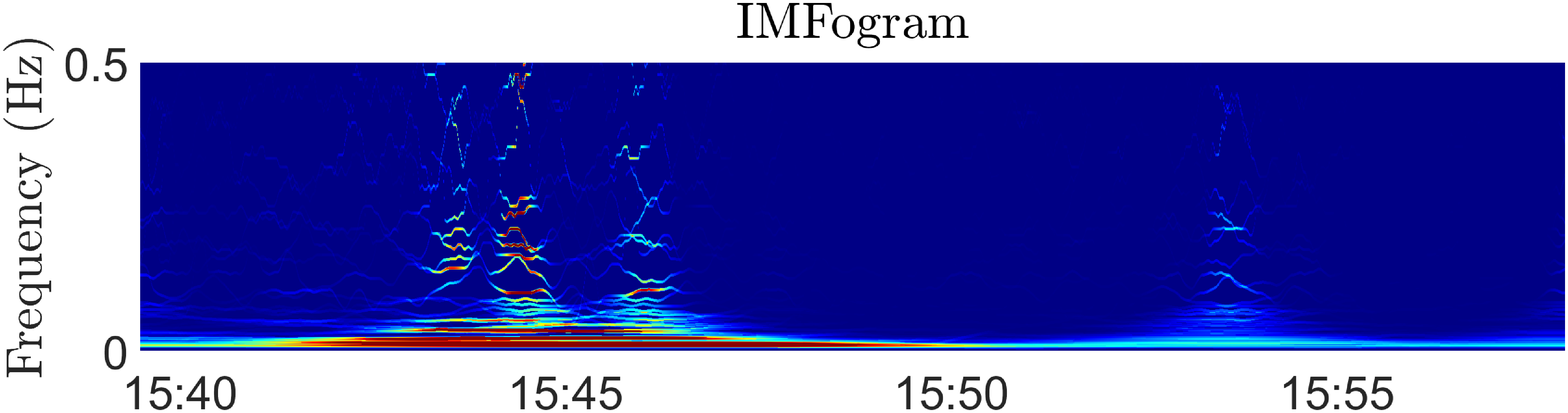}
\caption{Ionospheric electron density example: in the first row we show the SST representation of the sampled electron density. In the second and third row we present its HHT and IMFogram TFRs, respectively.}
\label{fig:SWARMM-VIP2}
\end{figure}

In Figure \ref{fig:SWARMM-VIP2}, first row, we show how the SST can improve, as expected theoretically \cite{daubechies1996nonlinear,daubechies2011synchrosqueezed}, the time and frequency localization obtained using the classical CWT. If we decompose the signal first into IMFs, then we can produce the HHT and IMFogram TFRs, which are plotted in the middle and bottom row of Figure \ref{fig:SWARMM-VIP2}. From these last two panels, we can see that the HHT has problems, at least in this example, capturing in an accurate way the energy and frequency of the IMFs, whereas the IMFogram proves to be really accurate, producing a plot which is even crisper and more focused than the SST.

\section{Conclusions}\label{sec:Conclusions}

In many real life applications the focus is on the time-frequency analysis of nonstationary signals. Many classical methods, like short time Fourier transform, continuous wavelet, and modern techniques, like reassignment, synchrosqueezing and alternative algorithms, prove to have limitations in producing crisp and focused time-frequency representations of signals. By leveraging on the groundbreaking idea of first decomposing a given signal into simple oscillatory components, a.k.a. intrinsic mode functions (IMFs), and then applying to each of them a time-frequency analysis, in this work we present the innovative IMFogram time-frequency representation method and its numerical analysis. In particular, after reviewing known theoretical properties of the so called Fast Iterative Filtering (FIF) method for the decomposition of a signal into IMFs, we propose new theoretical results regarding FIF and IMFogram. For FIF, we prove that the algorithm can preserve the ``$L_1$ Fourier Energy'' of the signal, which allows us to prove that FIF decomposition cannot contains ``unwanted oscillations'', meaning that all IMFs produced by FIF are meaningful. Furthermore, for the IMFogram algorithm, we prove that, under some hypothesis on the signal, the IMFogram time-frequency representation does converge to the corresponding spectrogram as the stopping criterion $\delta$ used in FIF to produce the IMFs is sent to 0.
We conclude this work with a few synthetic and real life numerical examples showing the performance of the proposed technique.

Even though many new insights on the decomposition into IMFs and their time-frequency representation are presented in this work, many problems still remain unsolved. First of all, iterative decomposition methods, like FIF--based and EMD--based techniques, and all other methods based on optimization, like sparse time-frequency representation, empirical wavelet transform, variational mode decomposition, just to name a few, cannot guarantee a uniqueness in the decomposition. How this can impact the associated time-frequency representation is yet to be studied.

In this work we propose a new definition of ``unwanted oscillations'' produced by a decomposition method. This definition is, to the best of our knowledge, the first of its kind. We assume that other definitions of what an unwanted oscillation is can be proposed in the context of signal decomposition and we plan to work in this direction in the future.

The IMFogram convergence to the spectrogram has been proved in this work for a nonstationary signal which is piece-wise stationary. This result can be extended to more general nonstationary signals. We plan to study this problem in a separate work.

Another interesting problem regards IMFogram time-frequency representation robustness to parameters tuning in the IMFogram itself, as well as in the FIF decomposition method. This is an open research direction. We plan to tackle it in a future work.

It is also important to point out that the main step of the IMFogram algorithm, i.e. \eqref{eq:IMFogram}, is meaningful because  the IMFs produced by IF and FIF methods are, based on numerical evidences we collected so far, in general quasi--orthogonal. However, this fact has not been proven rigorously yet. We plan to study this aspect in another work.

Finally, we point out that the proposed IMFogram time-frequency representation algorithm is based on the strong assumption that during each period of oscillation the IMFs have a constant instantaneous frequency. This is true for signals having interwave modulation of their frequencies. The very same assumption is made, implicitly, in all classical methods based on Fourier and wavelet transforms. However, many real life signals show what is called an intrawave modulation of their instantaneous frequencies. There is the need to develop a more general time-frequency representation able to handle also such intrawave modulations. This will be the subject of a future work.

\section*{Acknowledgments}

AC is a member of the INdAM Research group GNCS. WSL is partially supported by a Simon Foundation Collaboration Grant for Mathematicians. HZ is supported in part by NSF under grants DMS-1830225, and ONR N00014-18-1-2852.

\end{document}